\newtheorem{theorem}{Theorem}[section]
\newtheorem{proposition}[theorem]{Proposition}
\newtheorem{lemma}[theorem]{Lemma}
\newtheorem{corollary}[theorem]{Corollary}
\theoremstyle{definition}
\newtheorem{remark}[theorem]{Remark}
\newtheorem{question}[theorem]{Question}
\newcommand{\lexge}{ \underset{\mathit{Lex}}{\succeq}}
\DeclareMathOperator{\HF}{HF}
\DeclareMathOperator{\HS}{HS}
\DeclareMathOperator{\ann}{Ann}
\DeclareMathOperator{\rank}{rank}
\title{On generic principal ideals in the Exterior algebra}
\author{Samuel Lundqvist and Lisa Nicklasson}
\begin{document}

\begin{abstract}
We give a lower bound on the Hilbert series of the exterior algebra modulo a principal ideal generated by a generic form of odd degree and disprove a conjecture by Moreno-Soc\'ias and Snellman. We also show that the lower bound is equal to the minimal Hilbert series in some specific cases.
\end{abstract}

\maketitle
\section{Introduction}
Given a graded algebra and an ideal generated by a generic form in this algebra, what is the Hilbert series of the quotient? This question is central in the study of the Lefschetz properties and the Fr\"oberg conjecture \cite{frobergconj} for graded commutative algebras, and much attention is drawn to this area. See the overview papers \cite{fr-lu,mi-na} for references.

We consider the question above for the exterior algebra on $n$ generators, continuing the pioneering work by Moreno-Soc\'ias and Snellman \cite{mo-sn}. 

Let $V$ be an $n$-dimensional vector space over $\mathbb{C}$, spanned by $x_1, \ldots, x_n$. We use the notation $E = \bigwedge V$, and $E_k= \bigwedge^k V$, so that $E= \bigoplus_{k=0}^n E_k$. 
 A form of degree $d$ in $E$ can be considered as a point in $A=\mathbb{A}_\mathbb{C}^N$, where $N={\binom{n}{d}}$, and it is well known that generic forms belong to a Zariski-open dense subset of $A$ on which the Hilbert series for the corresponding algebras is constant and minimal in the coefficient-wise sense.

For $d$ even, Moreno-Soc\'ias and Snellman
 proved that the minimal Hilbert series is equal to $[(1-t^d)(1+t)^n]$. Let us recall the argument. Let $f$ be a form in $E$ of degree $d$. The Hilbert series of  $E$ is equal to $(1+t)^n$, 
and if the multiplication map $\cdot f: E_i \to E_{i+d}$ is either surjective or injective for all $i$, then it is an easy exercise to show that the Hilbert series of $E/(f)$ is equal to $[(1-t^d)(1+t)^n]$, and that this series is the lowest possible. Here $[\ldots]$ means ''truncate before the first non-positive term''. By using a combinatorial argument, they then showed that $E/(h_d)$, where $h_d$ denotes the sum of all monomials of degree $d$, has series equal to $[(1-t^d)(1+t)^n]$ when $d$ is even. Thus $[(1-t^d)(1+t)^n]$ is the minimal Hilbert series.

The aim of this paper is to study the situation when $f$ has odd degree, which turns out to be surprisingly hard.
A first analysis of the odd situation was done in \cite{mo-sn}, and based upon extensive computer calculation, two conjectures for the generic series were given. 
The first conjecture, \cite[Conjecture 6.1]{mo-sn},  concerns odd $d \geq 5$, while the second conjecture,  \cite[Conjecture 6.2]{mo-sn}, treats the case $d = 3$.

Our main result is Corollary \ref{cor:mainthm_HF}, which gives a lower bound for the generic series, disproving their first conjecture. The lower bound is based upon two observations that were not part of the analysis in loc. cit.; that the Hilbert series of $E/\ann(f)$ is symmetric about $\frac{n-d}{2}$, and a rank deficiency result for the linear map $\cdot f: E_{(n-d)/2} \to E_{(n+d)/2}$ when $n$ is odd.  However, our results supports the second conjecture, and we show in Proposition \ref{prop:MSSconj} that  \cite[Conjecture 6.2]{mo-sn} agrees with our lower bound (for $d = 3$).
We are also able to show that the lower bound  is equal to the minimal Hilbert series in some special cases, by finding explicit elements in the corresponding open sets. 

Our main shortcoming is that we have not been able to show what the minimal series is, and that we have not been able to give a general construction of an odd form that attains our lower bound, that is, we do not have an equivalent to $h_d$ from the even case.  
  (The form $h_d$ has a linear factor when $d$ is odd, so the corresponding Hilbert series cannot be minimal.)

To give a flavor of the complex situation that arises, consider the following four algebras; $E(7,3), E(7,5), E(9,3), E(11,5)$, where $E(n,d)$ denotes the exterior algebra on $n$ generators modulo a principal ideal generated by a generic form of degree $d$. We can prove that they posses the following Hilbert series, denote by $\HS(\cdotp,t)$;
\begin{align*}
\HS(E(7,3),t) &= [(1+t)^7 (1-t^3)],\\
\HS(E(7,5),t) &= [(1+t)^7 (1-t^5)]  + t^6,\\
\HS(E(9,3),t) &= [(1+t)^9 (1-t^3)]  + 4 t^6,\\
\HS(E(11,5),t) &= [(1+t)^{11}(1-t^5)]  + t^8. 
\end{align*}
From the general lower bound, it follows that $\HS(E(7,3),t) \geq [(1+t)^7 (1-t^3)]$, so to prove equality, it is enough to find one cubic form such that the series is attained, and this is done in Proposition \ref{prop:smalld} by using Macaulay2 \cite{M2}.

The extra term in the series for $E(7,5)$ means that the map $ \cdot f$ from degree one to degree six in the exterior algebra on seven generators has a one-dimensional kernel, and follows as a special case from Proposition \ref{prop:n-2}. 

In the third situation, the form $f$ itself is in the kernel of the map $\cdot f$ from degree three to degree six in the exterior algebra on nine generators, so we expect a difference from 
$[(1+t)^9 (1-t^3)]$ by $t^6$. But in fact, we show in Proposition \ref{prop:n9d3} that the kernel has dimension four, and this deficiency is explained by using the classification of trivectors given in  \cite{Vinberg}. 

Finally, that the kernel of the map from degree three to degree eight when $n = 11$ has dimension at least one is explained in Proposition \ref{prop:dual_rank} as a consequence of the well known result that a skew-symmetric matrix has even rank. Equality is settled by a calculation in Macaulay2.

The paper is organized as follows. In Section \ref{sec:duality} we derive duality results, which in particular will give us the rank deficiency criteria. In Section \ref{sec:proofs}, we will present our lower bound.  In Section \ref{sec:sharp_bounds}, we will show that the lower bound is equal to the minimal series in some special cases. Section \ref{sec:conj} concerns the conjectures by Moreno-Soc\'ias and Snellman. Finally, in Section \ref{sec:disc} we present questions and open problems related to the lower bound, which we hope will inspire the reader to continue the investigation. The central question of this paper is Question \ref{question:main}  --- Is our lower bound equal to the minimal series, except when $(n,d) \neq (9,3)$?

We will use the short notation $x_ix_j$ for $x_i \wedge x_j$. Notice that, in the exterior algebra, homogeneous left and right ideals coincide. Throughout this paper, all ideals will be homogeneous, and hence there is no need to distinguish between left and right ideals.

\section{Duality} \label{sec:duality}

Given a form $f \in E_d$, let $\cdot f : E_i \to E_{i+d}$ denote the linear map defined by $a \mapsto fa$. In this section we will see that the maps $\cdot f : E_i \to E_{i+d}$ and $\cdot f : E_{n-i-d} \to E_{n-i}$ are closely related. In a particular choice of basis, this will also give additional information about the map $\cdot f : E_{(n-d)/2} \to E_{(n+d)/2}$, in the case $n-d$ is even. There is no restriction on the degree $d$ in this section. In Section \ref{sec:proofs} we will apply the results, in the case $d$ is odd.  

Let us introduce some notation. For a set $I=\{i_1, \ldots, i_m\}$ of $m$ positive integers $i_1<i_2< \dots < i_m\le n$, let $x_I:=x_{i_1} \cdots x_{i_m}$. Let $\hat{x}_I=x_{j_1} \cdots x_{j_{n-m}}$ be the product given by $\{j_1, \ldots, j_{n-m}\} = \{1, \ldots, n\} \setminus I$ and $j_1 < \dots < j_{n-m}$.  Hence $\hat{x}_Ix_I=\pm x_1 \cdots x_n$.  It follows that $x_Ix_J\ne 0$ if and only if $I \cap J = \emptyset$. For a set of monomials $x_{I_1}, \ldots, x_{I_s}$, with non-zero product, and $s\ge2$, let $\sigma(I_1, \ldots, I_s)= \pm 1$ be defined by 
 \[ \sigma(I_1, \ldots, I_s)x_{I_1} \cdots x_{I_s} = x_{j_1} x_{j_2} \cdots x_{j_t}, \ \mbox{where} \ j_1<j_2< \dots < j_t.\]
 In other words, $\sigma(I_1, \ldots, I_s)$ is the sign obtained when we rewrite $x_{I_1} \cdots x_{I_s}$ so that the indices of the variables are in increasing order. Also, let $\sigma(I)$ be defined by 
 \[\sigma(I)\hat{x}_Ix_I=x_1 \cdots x_n. \] 
 
The following two properties can easily be verified. 

\begin{itemize}
 \item For $I,J$ and $K$ such that $x_Ix_Jx_K \ne 0$,  
 \[\sigma(I,J,K) = (-1)^{|I||J|}\sigma(J,I,K) = (-1)^{|J||K|}\sigma(I,K,J).\] 
 \item If, in addition, $I \cup J \cup K = \{1, \ldots, n\}$, then 
 \[ \sigma(I,J)\sigma(K)=\sigma(I,J,K). \]
\end{itemize}

Let $\mathbb{B}_m$ and $\widehat{\mathbb{B}}_m$ denote the two bases for $E_m$ given by all monomials $x_J$ of degree $m$, and all monomials $\sigma(I)\hat{x}_I$ of degree $m$, respectively. 

\begin{lemma}\label{lemma:dualmap}
 Let $f \in E_d$, and let $M$ be the matrix of $\cdot f : E_m \to E_{m+d}$ w.r.t. the bases $\mathbb{B}_m$ and  $\widehat{\mathbb{B}}_{m+d}$. Then the matrix of $\cdot f : E_{n-m-d} \to E_{n-m}$ w.r.t. the bases $\mathbb{B}_{n-m-d}$ and  $\widehat{\mathbb{B}}_{n-m}$ is given by $(-1)^{m(n-m-d)}M^t$. 
\end{lemma}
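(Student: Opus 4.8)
The plan is to write down both matrices entrywise in terms of the coefficients of $f$ and the structure constants $\sigma(\cdot)$, and then deduce the relation by a direct comparison, with the two displayed $\sigma$-identities doing all the bookkeeping. Throughout I would write $f=\sum_{|L|=d}c_L\,x_L$ and, for a subset $K\subseteq\{1,\dots,n\}$, set $K^{c}=\{1,\dots,n\}\setminus K$.

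First I would compute $M$. Its columns are indexed by the $J$ with $|J|=m$ and its rows by the $K$ with $|K|=n-m-d$, the corresponding vector of $\widehat{\mathbb{B}}_{m+d}$ being $\sigma(K)\hat{x}_K$. Using $x_Lx_J=\sigma(L,J)\,x_{L\cup J}$ for $L\cap J=\emptyset$, together with $x_{L\cup J}=\hat{x}_{(L\cup J)^{c}}$, I expand $fx_J$ in the target basis; setting $K=(L\cup J)^{c}$, equivalently $L=K^{c}\setminus J$, gives
\[
M_{K,J}=\begin{cases}c_{K^{c}\setminus J}\,\sigma(K^{c}\setminus J,\,J)\,\sigma(K)&\text{if }K\cap J=\emptyset,\\[1mm]0&\text{otherwise.}\end{cases}
\]
The same computation with $m$ replaced by $n-m-d$ yields the matrix $M'$ of $\cdot f\colon E_{n-m-d}\to E_{n-m}$, with rows indexed by $K'$ ($|K'|=m$) and columns by $J'$ ($|J'|=n-m-d$):
\[
M'_{K',J'}=c_{K'^{c}\setminus J'}\,\sigma(K'^{c}\setminus J',\,J')\,\sigma(K')\qquad\text{for }K'\cap J'=\emptyset,
\]
and $0$ otherwise.

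The core of the argument is then the comparison of $M'_{K',J'}$ with $(M^{t})_{K',J'}=M_{J',K'}$, where in the latter $J'$ (size $n-m-d$) occupies the row slot of $M$ and $K'$ (size $m$) the column slot. Two things need checking. First, the supports agree, since both entries vanish precisely when $K'\cap J'\neq\emptyset$. Second, in the nonzero case the two $d$-sets indexing the coefficient coincide: $K'^{c}\setminus J'=\{1,\dots,n\}\setminus(K'\cup J')=J'^{c}\setminus K'=:L_0$, so both entries carry the same coefficient $c_{L_0}$. It remains to relate the signs $\sigma(L_0,J')\,\sigma(K')$ and $\sigma(L_0,K')\,\sigma(J')$. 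Since $L_0,J',K'$ partition $\{1,\dots,n\}$, the second bulleted identity gives $\sigma(L_0,J')\sigma(K')=\sigma(L_0,J',K')$ and $\sigma(L_0,K')\sigma(J')=\sigma(L_0,K',J')$, and the first bulleted identity gives $\sigma(L_0,J',K')=(-1)^{|J'||K'|}\sigma(L_0,K',J')=(-1)^{m(n-m-d)}\sigma(L_0,K',J')$. Combining, $M'_{K',J'}=(-1)^{m(n-m-d)}(M^{t})_{K',J'}$, that is, $M'=(-1)^{m(n-m-d)}M^{t}$, as claimed.

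The step I expect to be the main obstacle is not any single identity but the bookkeeping around the transpose: the index-set sizes $m$ and $n-m-d$ are exchanged, so one must be careful to place each $\sigma$ in the correct slot and to verify that re-expanding $x_{L\cup J}$ in $\widehat{\mathbb{B}}_{m+d}$ contributes exactly the factor $\sigma(K)$ with no stray sign. Once the two entry formulas are correctly recorded, the rest is a mechanical application of the two $\sigma$-identities; the only real risk is an error in the exponent of $-1$, which I would guard against by testing the boundary case $m=0$ (where $M$ is a column vector and $M'$ a row vector, and the claim reduces to the easily verified $\sigma(J')=\sigma(J'^{c},J')$).
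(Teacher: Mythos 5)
Your proof is correct and takes essentially the same route as the paper's: both compute the matrix entries explicitly as $c_{L_0}\,\sigma(L_0,J)\,\sigma(K)$, fold the two signs into a single $\sigma(L_0,J,K)$ via the partition identity, and then compare with the transposed entry using the swap identity $\sigma(L_0,J',K')=(-1)^{|J'||K'|}\sigma(L_0,K',J')$ to extract the factor $(-1)^{m(n-m-d)}$. The differences are purely notational (your $K$ and $L_0$ are the paper's $I$ and $K$), and your $m=0$ sanity check, which correctly reduces to $\sigma(J')=\sigma(J'^{c},J')$, is a sensible extra safeguard.
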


Before we prove Lemma \ref{lemma:dualmap}, we will make a small comment about the definition of the matrix $M$, and its transpose. Formally, the matrix of a linear map depends on the ordering of the basis elements. However, we can describe the matrix $M$ as the rows being indexed by all $I=\{i_1, \ldots, i_{n-m-d}\}$, and the columns by all $J=\{j_1, \ldots, j_m\}$. In the same way, the rows in the matrix of $\cdot f : E_{n-m-d} \to E_{n-m}$ are indexed all $J=\{j_1, \ldots, j_m\}$, and the columns by all $I=\{i_1, \ldots, i_{n-m-d}\}$. The matrix $M^t$ refers to the matrix whose element at position $(J,I)$ is the same as the element at position $(I,J)$ in the matrix $M$. 

\begin{proof}
 The element at position $(I,J)$ in the matrix $M$ is the coefficient of $\sigma(I)\hat{x}_I$ in $fx_J$. This is $0$ if $I \cap J \ne \emptyset$. If $I$ and $J$ are disjoint, let $K=\{k_1, \ldots, k_d\}$ be given by $ K = \{ 1, \ldots, n\} \setminus ( I \cup J).$ Let $\alpha_K$ be the coefficient of $x_K$ in $f$. Then the coefficient of $\hat{x}_I$ in $fx_J$ is $\sigma(K,J)\alpha_K$. It follows that the element at position $(I,J)$ in the matrix M is 
 \[ \sigma(I)\sigma(K,J)\alpha_K = \sigma(K,J,I) \alpha_K. \]
 
Let $M'$ denote the matrix of $\cdot f : E_{n-m-d} \to E_{n-m}$. In the same way as above we obtain the element at position $(J,I)$ in $M'$ as $0$ if $I \cap J \ne \emptyset$, and 
 \[ \sigma(K,I,J) \alpha_K = (-1)^{|I||J|} \sigma(K,J,I) \alpha_K =(-1)^{m(n-m-d)} \sigma(K,J,I) \alpha_K, \] 
 where $K$ is the same as above, if $I \cap J = \emptyset$. It follows that $M'=(-1)^{m(n-m-d)}M^t$. 
\end{proof}

From Lemma \ref{lemma:dualmap} we can now draw the following conclusions. 

\begin{proposition}\label{prop:dual_rank}
 Let $f \in E_d$. Then the maps $\cdot f : E_m \to E_{m+d}$ and $\cdot f : E_{n-m-d} \to E_{n-m}$ have the same rank. 
 
 Moreover, suppose $n-d$ is an even number. When $(n-d)/2$ is even, the map $f:E_{(n-d)/2} \to E_{(n+d)/2}$ can be represented by a symmetric matrix. When $(n-d)/2$ is odd it can be represented by a skew-symmetric matrix, and hence has even rank. 
\end{proposition}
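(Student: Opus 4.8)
The plan is to read off both statements directly from Lemma~\ref{lemma:dualmap}, which already packages the relevant matrix comparison; no further hard work with signs is needed beyond specializing the indices correctly.

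First, for the equal-rank claim, I would let $M$ be the matrix of $\cdot f : E_m \to E_{m+d}$ with respect to the bases $\mathbb{B}_m$ and $\widehat{\mathbb{B}}_{m+d}$. By Lemma~\ref{lemma:dualmap}, the matrix of $\cdot f : E_{n-m-d} \to E_{n-m}$ with respect to $\mathbb{B}_{n-m-d}$ and $\widehat{\mathbb{B}}_{n-m}$ equals $(-1)^{m(n-m-d)}M^t$. Since rank is invariant under transposition and under scaling by the nonzero scalar $\pm 1$, the two maps have the same rank, and this part is immediate.

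Second, for the symmetry statement I would specialize to $m=(n-d)/2$, which is precisely where the hypothesis that $n-d$ is even enters. With this choice one has $n-m-d=m$ and $n-m=m+d$, so $\cdot f : E_{n-m-d} \to E_{n-m}$ is literally the same linear map as $\cdot f : E_m \to E_{m+d}$, and the bases $\mathbb{B}_{n-m-d},\widehat{\mathbb{B}}_{n-m}$ coincide with $\mathbb{B}_m,\widehat{\mathbb{B}}_{m+d}$. Hence the matrix of this single map, computed in two ways, must agree, forcing $M=(-1)^{m(n-m-d)}M^t=(-1)^{m^2}M^t$. Using $m^2\equiv m \pmod 2$, this becomes $M=(-1)^m M^t$, so $M$ is symmetric when $m=(n-d)/2$ is even and skew-symmetric when $m$ is odd. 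Since we work over $\mathbb{C}$, a skew-symmetric matrix has even rank, which yields the final assertion.

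The only point demanding care — the ``main obstacle'', though it is really a bookkeeping subtlety rather than a genuine difficulty — is justifying that the two matrices honestly share the same row and column index sets when $m=n-m-d$, so that comparing $M$ with $M^t$ is a meaningful entrywise statement. This is exactly the indexing convention recorded in the remark preceding the proof of Lemma~\ref{lemma:dualmap}: with $m=n-m-d$ both index sets $I$ and $J$ range over the subsets of $\{1,\dots,n\}$ of size $m$, so $M$ is square and the relation $M=(-1)^m M^t$ is a genuine (skew-)symmetry of its entries rather than a mere coincidence of ranks.
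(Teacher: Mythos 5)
Your proof is correct and is essentially identical to the paper's: both parts are read off from Lemma~\ref{lemma:dualmap}, the first via invariance of rank under transposition and scaling by $\pm1$, the second by specializing $m=(n-d)/2$ so that the two maps coincide and $M=(-1)^{m^2}M^t$. Your extra remarks --- the reduction $m^2\equiv m \pmod 2$ and the observation that the row and column index sets agree so the relation is a genuine entrywise (skew-)symmetry --- only make explicit what the paper leaves implicit.
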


\begin{proof}
 The first part follows directly from Lemma \ref{lemma:dualmap}, since a matrix always has the same rank as its transpose. 
 
 Suppose that $n-d$ is even, and let $m=(n-d)/2$ in Lemma \ref{lemma:dualmap}. In this case $m=n-m-d$, so the two matrices are the same. It follows that the matrix $M$ of $f:E_{(n-d)/2} \to E_{(n+d)/2}$, w. r. t. the bases $\mathbb{B}_{(n-d)/2}$ and $\widehat{\mathbb{B}}_{(n+d)/2}$,  satisfies
 \[ M=(-1)^{((n-d)/2)^2}M^t. \]
 Hence it is symmetric when $(n-d)/2$ is even, and skew-symmetric when $(n-d)/2$ is odd. 
\end{proof}

\begin{figure}
 \includegraphics{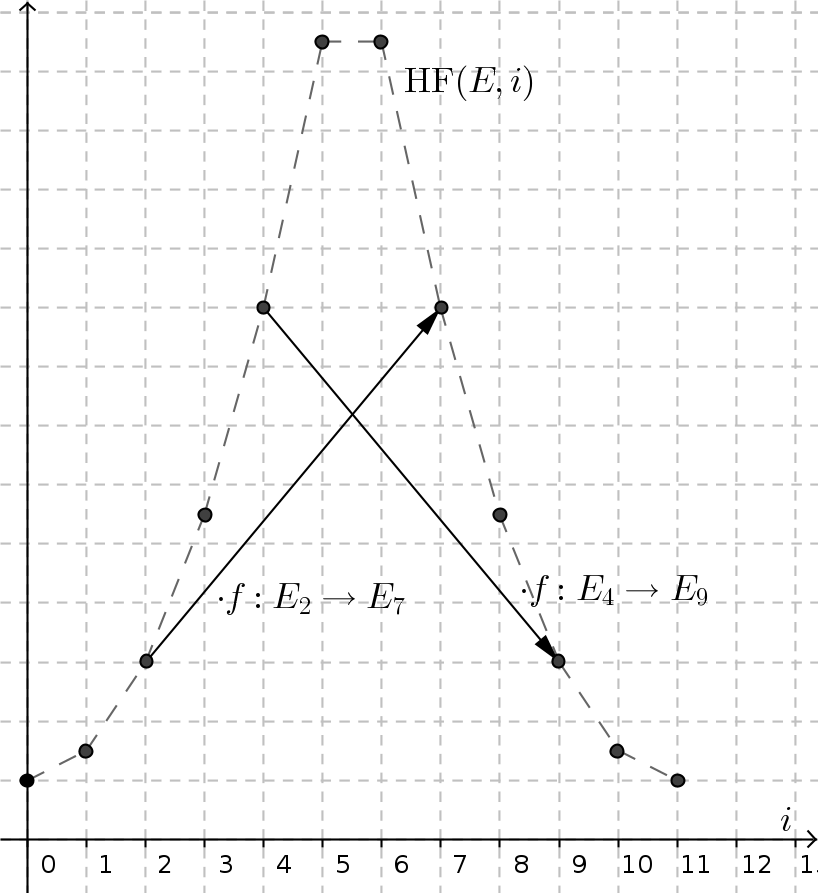}
 \caption{Here $E$ denotes the exterior algebra on 11 variables. The two maps have the same rank, according to Proposition \ref{prop:dual_rank}. }
\end{figure}

\begin{remark}\label{rmk:dibag}
Dibag \cite{dibag} showed that  if $n$ is odd, every element in $E_{n-2}$ has a linear factor. Using Proposition \ref{prop:dual_rank}, we obtain an alternative proof of this fact: Suppose $n$ is odd, and $f \in E_{n-2}$. By Proposition \ref{prop:dual_rank} the map $\cdot f : E_{1} \to E_{n-1}$ has even rank. Since $n$ is odd, it must have a non-trivial kernel. That is to say, there is an $\ell \in E_1$ such that $\ell f=0$. After a change of basis we can assume that $\ell=x_1$. But it is clear that $x_1f=0$ implies that $f = x_1 f'$ for some $f'$, concluding the argument.
\end{remark}

Proposition \ref{prop:dual_rank} also has important implications on the Hilbert function, here denoted by $\HF$. 

\begin{proposition}\label{prop:hilb_E/f}
 Let $f \in E_d$. Then 
 \[
   \HF(E/(f), i) = \binom{n}{i}-\binom{n}{i-d} + \HF(E/(f),n+d-i),
 \]
 for all $d \le i \le n$. It follows that the Hilbert function $\HF(E/(f),i)$ is completely determined by its values for $i \le \frac{n+d}{2}$. Moreover, when $\frac{n-d}{2}$ is an odd integer, we have 
 \[
 \HF\big(E/(f), \tfrac{n+d}{2}\big) \equiv \binom{n}{(n+d)/2} \mod 2.
\]
\end{proposition}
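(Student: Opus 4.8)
The plan is to prove the three assertions in order, drawing everything from Proposition \ref{prop:dual_rank} and the standard exact-sequence bookkeeping for the map $\cdot f$. First I would establish the symmetry relation. The key observation is that for any $f \in E_d$ there is an exact sequence in each degree,
\[
0 \to \ker(\cdot f)_i \to E_i \xrightarrow{\ \cdot f\ } E_{i+d} \to (E/(f))_{i+d} \to 0,
\]
so that $\HF(E/(f), i+d) = \binom{n}{i+d} - \rank(\cdot f : E_i \to E_{i+d})$, and simultaneously $\HF(E/(f), i) = \binom{n}{i} - \rank(\cdot f : E_{i-d} \to E_i)$. I would reindex and apply the rank equality from Proposition \ref{prop:dual_rank}: the maps $\cdot f : E_m \to E_{m+d}$ and $\cdot f : E_{n-m-d} \to E_{n-m}$ share a common rank $r_m$. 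Writing the Hilbert function value at degree $i$ and at degree $n+d-i$ in terms of these ranks and matching the two via the rank equality (with $m = i-d$, so $n-m-d = n-i$) should yield the stated identity
\[
\HF(E/(f),i) - \HF(E/(f), n+d-i) = \binom{n}{i} - \binom{n}{i-d},
\]
after using $\binom{n}{i} = \binom{n}{n-i}$ to cancel the complementary binomial terms. This is the main computational step, though it is routine once the indices are lined up correctly.

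Second, the ``completely determined'' claim is a direct corollary: the identity expresses $\HF(E/(f), i)$ for large $i$ (those with $i > \frac{n+d}{2}$) in terms of the reflected value at $n+d-i < \frac{n+d}{2}$ plus a known binomial difference, so knowledge of the function below $\frac{n+d}{2}$ determines it everywhere. I would simply note that $i \mapsto n+d-i$ is an involution fixing $\frac{n+d}{2}$ and swapping the two halves of the range $d \le i \le n$.

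Third, for the congruence I would specialize to the fixed point $i = \frac{n+d}{2}$, which is legitimate precisely because $\frac{n-d}{2}$ being an odd integer forces $n-d$ even and hence $\frac{n+d}{2}$ an integer. Here the relevant map is $\cdot f : E_{(n-d)/2} \to E_{(n+d)/2}$, and the second part of Proposition \ref{prop:dual_rank} tells us that when $(n-d)/2$ is odd this map is represented by a skew-symmetric matrix, so its rank $r$ is even. Then
\[
\HF\big(E/(f), \tfrac{n+d}{2}\big) = \binom{n}{(n+d)/2} - r \equiv \binom{n}{(n+d)/2} \pmod 2,
\]
since $r$ is even. The only subtlety I anticipate is the careful sign/index bookkeeping in the first step—ensuring the exact sequence is applied in the correct degrees and that the binomial terms telescope to exactly $\binom{n}{i} - \binom{n}{i-d}$ rather than some reflected variant; once that is pinned down, the remaining two parts follow immediately from the structural facts already proved.
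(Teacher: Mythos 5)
Your proposal is correct and follows essentially the same route as the paper's own proof: you express $\HF(E/(f),i)$ as $\binom{n}{i}-\rank(\cdot f : E_{i-d}\to E_i)$, invoke the rank equality of Proposition \ref{prop:dual_rank} with $m=i-d$ to match this against the reflected degree $n+d-i$, and obtain the congruence at $i=\tfrac{n+d}{2}$ from the skew-symmetric (even-rank) case of that proposition. The index bookkeeping you flag works out exactly as you anticipate, so there is nothing to add.
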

\begin{proof}
Clearly $\HF(E/(f),i)=\dim E_i = \binom{n}{i}$ for $i<d$, and $\HF(E/(f),i)=0$ for $i>n$. For $d \le i \le n$,
 \[
  \HF(E/(f), i) = \dim(E_i)-\rank(\cdot f: E_{i-d} \to E_{i}).
 \]
Proposition \ref{prop:dual_rank} gives
\begin{align*}
 \HF(E/(f), i) &= \dim(E_i) - \rank(\cdot f: E_{n-i} \to E_{n-i+d}) \\
 &=\dim(E_i)-\dim(E_{n-i+d})\\
 & \quad +\dim(E_{n-i+d}) - \rank(\cdot f: E_{n-i} \to E_{n-i+d})\\
 &= \binom{n}{i}-\binom{n}{n-i+d} + \HF(E/(f),n-i+d)\\
 &= \binom{n}{i}-\binom{n}{i-d} + \HF(E/(f),n+d-i) .
\end{align*}
Now suppose that $\frac{n-d}{2}$ is an odd integer. We have 
\[
 \HF\big(E/(f), \tfrac{n+d}{2}\big) = \dim(E_{(n+d)/2})-\rank(\cdot f: E_{(n-d)/2} \to E_{(n+d)/2}),
\]
and by Proposition \ref{prop:dual_rank}, the rank in the right hand side above is even. This proves the second part of Proposition \ref{prop:hilb_E/f}.
\end{proof}

\begin{remark}
  By Proposition \ref{prop:hilb_E/f}, the Hilbert function of $E/(f)$ is completely determined by the ranks of the maps $\cdot f: E_{i-d} \to E_{i}$ for $i \le \frac{n+d}{2}$. Notice that this is exactly the maps for which $\dim(E_{i-d}) \le \dim(E_i)$. See also Figure \ref{fig:maps}.
\end{remark}

\begin{figure}\label{fig:maps}
 \includegraphics{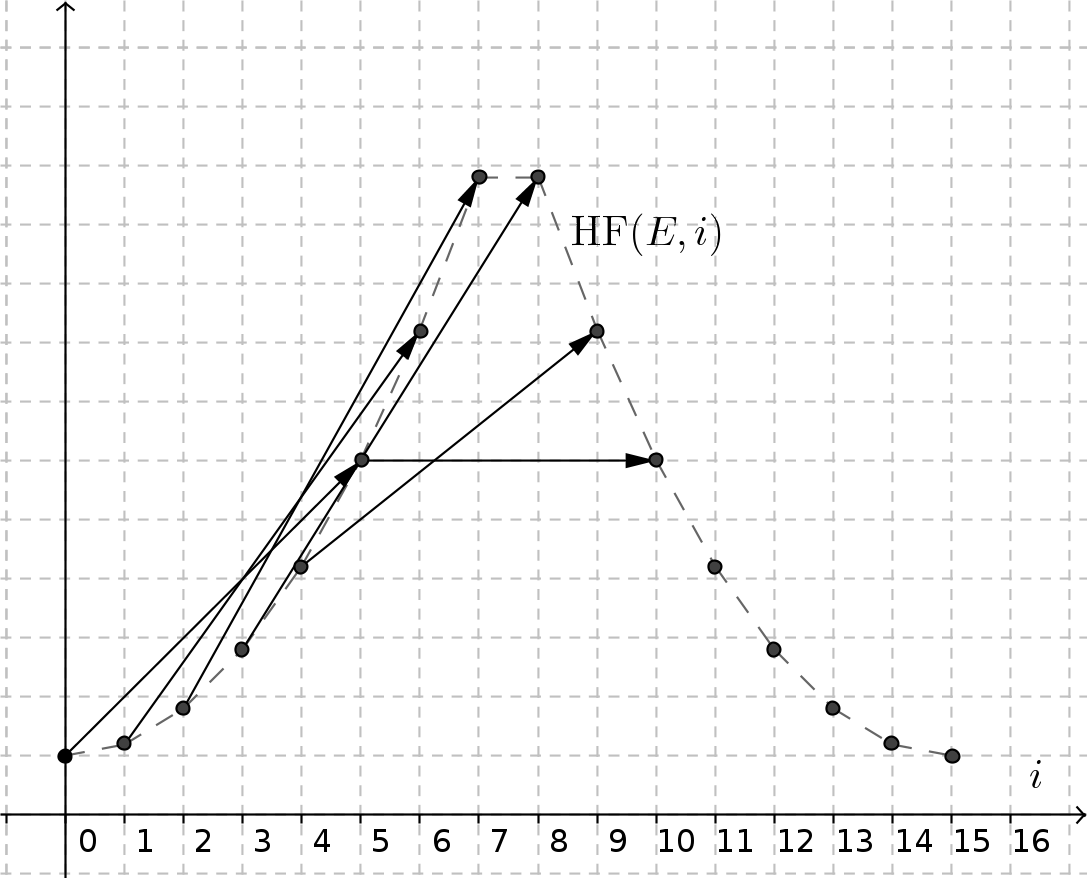}
 \caption{Here $E$ denotes the exterior algebra on 15 variables. For $f \in E_5$, the Hilbert function of $E/(f)$ is completely determined by the ranks of the maps $\cdot f : E_{i-5} \to E_i$, for $i=5,. . . , 10$. }
\end{figure}

\begin{remark}\label{rmk:hilb_E/ann}
 For a form $f$ of degree $d$, the Hilbert function of $E/\ann(f)$ is given by 
 \[
  \HF(E/\ann(f), i) = \rank(\cdot f : E_i \to E_{i+d}). 
 \]
 Notice that $ \HF(E/\ann(f), i) =0$ if $i>n-d$. For $i \le n-d$ we get 
 \[\HF(E/\ann(f), i) = \HF(E/\ann(f), n-i-d),\]
 by Proposition \ref{prop:dual_rank}. It follows that the Hilbert function of $E/\ann(f)$ is symmetric about $i=\frac{n-d}{2}$. 
\end{remark}

\section{A lower bound for the Hilbert series of $E/(f)$} \label{sec:proofs}

We will now use the results from Section \ref{sec:duality} to give a lower bound for the Hilbert series $E/(f)$, in the case $f$ is a form of odd degree. Recall that $f^2=0$, when $f$ is a form of odd degree, and hence $\ann(f)_i \supseteq (f)_i$. 

\begin{theorem}\label{thm:main}
Let $f$ be a form of odd degree $d$. Suppose $\ann(f)_i = (f)_i$, for all $i \le r \le \frac{n-d}{2}$. Then
\[ \HF(E/(f),i) =\sum_{\substack{k \ge 0 \\ kd \le i}}(-1)^k \binom{n}{i-kd}, \ \mbox{for all} \ i \le r+d.\]
Suppose next that $\ann(f)_i = (f)_i$, for all $i < r \le \frac{n-d}{2}$, and that $\ann(f)_r \supset (f)_r$. Then 
\[  \HF(E/(f),r+d) =\sum_{\substack{k \ge 0 \\ kd \le r+d}}(-1)^k \binom{n}{r+d-kd} + \dim(\ann(f)_r/(f)_r).   \]
\end{theorem}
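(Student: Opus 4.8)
The plan is to reduce both statements to a single linear recursion for the dimensions $a_i := \dim (f)_i$ and then solve it by telescoping. First I would record the two elementary consequences of rank--nullity applied to $\cdot f : E_{i-d} \to E_i$, whose kernel is $\ann(f)_{i-d}$ and whose image is $(f)_i$:
\[
 a_i = \binom{n}{i-d} - \dim \ann(f)_{i-d}, \qquad \HF(E/(f),i) = \binom{n}{i} - a_i.
\]
The crucial point is that the hypothesis $\ann(f)_{i-d} = (f)_{i-d}$ turns the first identity into the closed recursion
\[
 a_i = \binom{n}{i-d} - a_{i-d},
\]
since then $\dim \ann(f)_{i-d} = a_{i-d}$. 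Here I would also invoke the inclusion $(f)_i \subseteq \ann(f)_i$, valid because $d$ odd forces $f^2=0$; this is what makes $\dim(\ann(f)_r/(f)_r)$ a meaningful nonnegative defect in the second statement.

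For the first statement, with $\ann(f)_j = (f)_j$ for all $j \le r$, the recursion holds for every $i \le r+d$ (the hypothesis needed at index $i-d \le r$, and recursively at all smaller indices in the chain). With base case $a_j = 0$ for $j < d$, telescoping gives
\[
 a_i = \sum_{k \ge 1} (-1)^{k+1} \binom{n}{i-kd},
\]
where the sum self-terminates via $\binom{n}{m}=0$ for $m<0$. Substituting into $\HF(E/(f),i) = \binom{n}{i} - a_i$ absorbs the leading binomial into the alternating sum, yielding exactly $\sum_{k \ge 0} (-1)^k \binom{n}{i-kd}$.

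For the second statement I would isolate the single degree $i = r+d$ where the recursion breaks. Since $r-d < r$, the value $a_r$ is still given by the formula above; but at the step from $r$ to $r+d$ the kernel strictly exceeds $(f)_r$, so $\dim \ann(f)_r = a_r + \dim(\ann(f)_r/(f)_r)$. Feeding this into $a_{r+d} = \binom{n}{r} - \dim \ann(f)_r$ and then into $\HF(E/(f),r+d) = \binom{n}{r+d} - a_{r+d}$, a re-indexing of the alternating sum (shifting $k \mapsto k+1$) identifies $\binom{n}{r+d} - \binom{n}{r} + a_r$ with $\sum_{k \ge 0} (-1)^k \binom{n}{r+d-kd}$, and the defect $\dim(\ann(f)_r/(f)_r)$ survives as the stated additive correction.

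I do not expect a deep obstacle: once the recursion is in place the argument is bookkeeping. The steps demanding genuine care are the boundary analysis --- verifying that the hypothesis is used at exactly the right indices, so that the recursion is valid through $i=r+d$ in the first part and breaks in precisely one step in the second --- together with the sign and index shift in the telescoped sum. The single conceptual ingredient, rather than a difficulty, is recognizing the hypothesis $\ann(f)=(f)$ in low degrees as the statement that closes the recursion by forcing each kernel dimension to equal the previous image dimension.
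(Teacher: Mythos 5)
Your proposal is correct and takes essentially the same approach as the paper: your recursion $a_i=\binom{n}{i-d}-a_{i-d}$, closed by the hypothesis $\ann(f)_{i-d}=(f)_{i-d}$ and then telescoped, is exactly the unwound form of the exact complex $\cdots \overset{\cdot f}{\to} E_{i-d} \overset{\cdot f}{\to} E_{i} \overset{\cdot f}{\to} E_{i+d} \to [E/(f)]_{i+d} \to 0$ whose alternating sum of dimensions the paper takes. Your handling of the second part --- isolating the single broken step via rank--nullity so that the defect $\dim(\ann(f)_r/(f)_r)$ appears as $\dim\ann(f)_r-\dim(f)_r$, with $a_r$ still given by the closed formula since only indices $<r$ are used --- matches the paper's computation line for line.
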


\begin{proof}
Suppose first that $\ann(f)_i=(f)_i$, for all $i \le r \le \frac{n-d}{2}$.  Then 
\begin{equation} \label{eq:complex}
 0 \to \dots \overset{\cdot f}{\to} E_{i-d} \overset{\cdot f}{\to} E_{i} \overset{\cdot f}{\to} E_{i+d} \to [E/(f)]_{i+d} \to 0 
\end{equation}
is an exact sequence, for all $i\le r$. It follows that 
\begin{equation}\label{eq:HF_lower}
\HF(E/(f),i)=\dim[E/(f)]_i =\sum_{\substack{k \ge 0 \\ kd \le i}} (-1)^k \dim E_{i-kd} = \sum_{\substack{k \ge 0 \\ kd \le i}}(-1)^k \binom{n}{i-kd},
\end{equation}
for all $i \le r+d$. Suppose next that $\ann(f)_i = (f)_i$, for all $i < r \le \frac{n-d}{2}$, and that $\ann(f)_r \supset (f)_r$. Then (\ref{eq:HF_lower}) holds for all $i<r+d$. We get
\begin{align*}
 \HF(E/(f),r+d) &= \dim(E_{r+d}) -\rank(\cdot f : E_{r} \to E_{r+d}) \\
 &=\binom{n}{r+d} - \dim(E_r)+ \dim(\ker(\cdot f: E_r \to E_{r+d})) \\
 &= \binom{n}{r+d} - \dim(E_r)+\rank(\cdot f : E_{r-d} \to E_r)\\
 &\quad +\dim(\ker(\cdot f: E_r \to E_{r+d})) -\rank(\cdot f : E_{r-d} \to E_r)\\
 &= \binom{n}{r+d} - \HF(E/(f),r)  + \dim(\ann(f)_r)-\dim((f)_r)\\
 &= \sum_{\substack{k \ge 0 \\ kd \le m+d}}(-1)^k \binom{n}{m+d-kd}+ \dim(\ann(f)_r/(f)_r). \ \ \ \qedhere
\end{align*}
\end{proof}

\begin{corollary}\label{cor:mainthm_HF}
For a form $f\in E$ of odd degree $d$, let $h_i=\HF(E/(f),i)$, and let $\bm{h}$ be the sequence $(h_0, h_1, \ldots, h_n)$. Then $\bm{h} \lexge \bm{a},$
where $\bm{a} = \bm{b} + \bm{c},$ with 

$$
b_i = 
\begin{cases}
\displaystyle \sum_{\substack{k \ge 0 \\ kd \le i}}(-1)^k \binom{n}{i-kd} 		&\mbox{if }  0 \le i \leq \frac{n+d}{2} \\
\displaystyle \sum_{\substack{k \ge 1 \\ kd+i \le n}}(-1)^{k+1} \binom{n}{i+kd} &\mbox{if }   \frac{n+d}{2}<i \le n,
\end{cases}
$$
and

\begin{equation} \label{eq:ci}
c_i= \begin{cases}
			1 & \mbox{if } i = \frac{n+d}{2}, \ \text{and the numbers} \ \frac{n-d}{2} \text{and } b_{(n-d)/2} \ \text{are both odd}, \\
		 	0 & \mbox{otherwise.}
	\end{cases}
\end{equation}

\end{corollary}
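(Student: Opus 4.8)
The plan is to prove the stronger statement that $\bm h \lexge \bm a$ holds for \emph{every} form $f \in E_d$ of odd degree, so that in particular it holds for the generic one. Throughout write $q_i = \dim (f)_i = \rank(\cdot f : E_{i-d} \to E_i)$, so that $h_i = \binom{n}{i} - q_i$. First I would record two purely formal identities about $\bm b$: the recursion $b_i = \binom{n}{i} - b_{i-d}$, valid in the range $i \le \frac{n+d}{2}$, and the fact that the upper branch of $b_i$ (for $i > \frac{n+d}{2}$) is exactly what the symmetry of Proposition~\ref{prop:hilb_E/f} produces, namely $b_i = \binom{n}{i} - \binom{n}{i-d} + b_{n+d-i}$. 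A short reindexing of the defining alternating sums establishes both; in particular one gets $b_{(n+d)/2} = \binom{n}{(n+d)/2} - b_{(n-d)/2}$, which is needed for the parity term.

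The engine of the lower bound is a single inequality coming from $f^2=0$. Since $d$ is odd, $(f)_{i-d} = f E_{i-2d} \subseteq \ann(f)_{i-d} = \ker(\cdot f : E_{i-d} \to E_i)$, so rank--nullity yields $q_i \le \binom{n}{i-d} - q_{i-d}$. I would then run a lexicographic induction rather than seek a coefficientwise bound: this inequality controls $h_i$ only when $q_{i-d}$ is known to be at least $b_{i-2d}$, a fact the induction supplies precisely as long as the earlier coefficients still agree with $\bm b$. Concretely, let $i$ be the first index with $h_i \ne a_i$. By Proposition~\ref{prop:hilb_E/f} the values $h_j$ for $j > \frac{n+d}{2}$ are determined by those for $j < \frac{n+d}{2}$ through $h_j - b_j = h_{n+d-j} - b_{n+d-j}$, and since $c_j = 0$ there, a first deviation in the upper half would force an earlier one in the lower half; hence I may assume $i \le \frac{n+d}{2}$. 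For $j < i$ we have $h_j = a_j = b_j$, so $q_j = b_{j-d}$; feeding $q_{i-d} = b_{i-2d}$ into the displayed inequality and using the recursion gives $q_i \le \binom{n}{i-d} - b_{i-2d} = b_{i-d}$, i.e.\ $h_i \ge b_i$. As $i$ is a point of deviation this forces $h_i > b_i = a_i$ whenever $i < \frac{n+d}{2}$, which is exactly the lexicographic inequality.

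The remaining, and most delicate, point is degree $\frac{n+d}{2}$, where the correction $\bm c$ lives. Here I invoke Proposition~\ref{prop:dual_rank}: when $\frac{n-d}{2}$ is odd the map $\cdot f : E_{(n-d)/2} \to E_{(n+d)/2}$ is skew-symmetric, so $q_{(n+d)/2}$ is \emph{even}. If all earlier coefficients match $\bm b$, the inequality of the previous paragraph still gives $q_{(n+d)/2} \le b_{(n-d)/2}$; should $b_{(n-d)/2}$ be odd, evenness then forces $q_{(n+d)/2} \le b_{(n-d)/2} - 1$ and hence $h_{(n+d)/2} \ge \binom{n}{(n+d)/2} - b_{(n-d)/2} + 1 = b_{(n+d)/2} + 1 = a_{(n+d)/2}$, matching $c_{(n+d)/2}=1$. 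When $b_{(n-d)/2}$ is even there is no obstruction and $h_{(n+d)/2} \ge b_{(n+d)/2} = a_{(n+d)/2}$.

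The main obstacle is precisely this bookkeeping at the symmetry center. One must verify that the parity constraint is the only mechanism by which $\bm a$ can exceed the naive series $\bm b$, that it is correctly pinned to the single degree $\frac{n+d}{2}$, and that it is consistent with the symmetry relation. The last point is what makes the center special: $\frac{n+d}{2}$ is the fixed point of $j \mapsto n+d-j$, so the symmetry recursion there is vacuous, and a forced value $h_{(n+d)/2} = b_{(n+d)/2}+1$ propagates to the upper half only through coefficients strictly below the center, which already match $\bm b$. This is what guarantees that no further correction beyond $\bm c$ is needed above the middle, and it is the step I would check most carefully.
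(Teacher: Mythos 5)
Your proposal is correct and follows essentially the same route as the paper: the paper's proof runs Theorem \ref{thm:main} (whose exact-sequence argument is precisely your rank--nullity inequality $q_i \le \binom{n}{i-d} - q_{i-d}$ coming from $(f)_{i-d} \subseteq \ann(f)_{i-d}$, unrolled) on the first degree where $\ann(f)$ and $(f)$ differ, then applies the same parity correction at $i = \frac{n+d}{2}$ via Proposition \ref{prop:dual_rank}/\ref{prop:hilb_E/f} and the same symmetry computation $b_i = \binom{n}{i} - \binom{n}{i-d} + b_{n+d-i}$ to settle the upper half. Your only real deviations are cosmetic: you inline Theorem \ref{thm:main} instead of citing it, and you organize the induction around the first deviating coefficient of $\bm{h}$ rather than the first degree with $\ann(f)_r \ne (f)_r$, which correspond under $h_{r+d} - b_{r+d} = \dim(\ann(f)_r/(f)_r)$.
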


\begin{proof}
 We know that $\ann(f)_i \supseteq (f)_i$ for all $i$. Suppose we have $r$ such that $\ann(f)_i = (f)_i$, for all $i < r \le \frac{n-d}{2}$, and that $\ann(f)_r \ne (f)_r$. Then $h_i=a_i$ for all $i<r+d$, and $h_{r+d}>a_{r+d}$, by Theorem \ref{thm:main}. Now, suppose that $\ann(f)_i = (f)_i$, for all $i < \frac{n-d}{2}$. Then $h_i=a_i$ for all $i < \frac{n+d}{2}$, and $h_{(n+d)/2} \ge b_{(n+d)/2}$, by Theorem \ref{thm:main}. In the case $\frac{n-d}{2}$ is odd we have
 \[
  h_{(n+d)/2} \equiv \binom{n}{(n+d)/2} \mod 2,
 \]
  by Proposition \ref{prop:hilb_E/f}. If, in this case, $b_{(n+d)/2} \not \equiv \binom{n}{(n+d)/2} \mod 2$, we get that $h_{(n+d)/2} \ge b_{(n+d)/2}+1$. Notice that $b_{(n+d)/2}=\binom{n}{(n+d)/2} -b_{(n-d)/2}$. Hence the condition $b_{(n+d)/2} \not \equiv \binom{n}{(n+d)/2} \mod 2$ is equivalent to $b_{(n-d)/2}$ being odd. It follows that $h_{(n+d)/2} \ge a_{(n+d)/2}$. If $h_{(n+d)/2} = a_{(n+d)/2}$ we can compute the values of $h_i$, for all $i > \frac{n+d}{2}$, by the formula given in Proposition \ref{prop:hilb_E/f}. We get 
  \begin{align*}
 \HF(E/(f),i)&= \binom{n}{i}-\binom{n}{i-d} + \HF(E/(f),n-i+d) \\
 =&\binom{n}{i} - \binom{n}{n-i+d} + \sum_{\substack{k \ge 0 \\ kd \le n-i+d}}(-1)^k \binom{n}{n-i+d-kd}  \nonumber \\
 &= \sum_{\substack{k \ge 1 \\ kd \le n-i}}(-1)^{k+1} \binom{n}{n-i-kd} = \sum_{\substack{k \ge 1 \\ i+kd \le n}}(-1)^{k+1} \binom{n}{i+kd}
\end{align*}
  We can conclude that $\bm{h} \lexge \bm{a}$, in all cases. 
\end{proof}

We can also formulate the inequality in terms of a rational series as follows.

\begin{corollary}\label{cor:mainthm_ratseries}
Let 
 \begin{equation*}
  B(t) = \frac{t^{\lfloor(n+d)/2+1 \rfloor}p(t)+(1+t)^n}{1+t^d},
 \end{equation*}
where $p(t)$ is the polynomial of degree (at most) $d-1$, determined by 
\[
 t^{\lfloor(n+d)/2+1 \rfloor }p(t) = \sum_{\frac{n+d}{2}<i\le \frac{n+d}{2}+d} \! \Big( \! \sum_{\substack{k \in \mathbb{Z} \\ 0 \le i+kd \le n}} \!\! (-1)^{k+1}\binom{n}{i+kd} \Big) t^i.
\]
Let
\[
 C(t) = \begin{cases}
               t^{(n+d)/2} & \text{if} \ \frac{n-d}{2} \ \text{is an odd integer, and the coefficient of} \ t^{(n-d)/2} \ \text{in} \\ &  B(t) \ \text{is odd}, \\[1ex]
               0 & \text{otherwise.}
              \end{cases}
\]
Then, for any form $f \in E$ of odd degree $d$, 
\[
 \HS(E/(f),t) \lexge  B(t) + C(t).
\]
\end{corollary}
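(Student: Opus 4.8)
The plan is to show that Corollary \ref{cor:mainthm_ratseries} is nothing more than a repackaging of the coefficient-wise bound $\bm{h} \lexge \bm{a}$ already established in Corollary \ref{cor:mainthm_HF}. Concretely, I would prove that the rational function $B(t)$ has Taylor coefficients equal to the sequence $\bm{b}$, and that $C(t)$ encodes the correction term $\bm{c}$, so that $B(t)+C(t)$ is exactly the generating function $\sum_i a_i t^i$ (at least through the relevant range of degrees). Once this identification is in place, the conclusion $\HS(E/(f),t) \lexge B(t)+C(t)$ is immediate from Corollary \ref{cor:mainthm_HF}, since $\HS(E/(f),t) = \sum_i h_i t^i$ and the $\lexge$ order on series is by definition the $\lexge$ order on their coefficient sequences.

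The main work is therefore the verification that the coefficients of $B(t)$ agree with $b_i$. I would start from the defining relation $(1+t^d)B(t) = t^{\lfloor (n+d)/2 + 1\rfloor}p(t) + (1+t)^n$ and read off the recursion it imposes on the coefficients. Writing $B(t) = \sum_i \beta_i t^i$, the relation says $\beta_i + \beta_{i-d} = \binom{n}{i} + (\text{coefficient of } t^i \text{ in } t^{\lfloor (n+d)/2+1\rfloor}p(t))$. For $i \le \frac{n+d}{2}$ the polynomial contribution vanishes (its lowest-degree term sits above $\frac{n+d}{2}$), so $\beta_i = \binom{n}{i} - \beta_{i-d}$; unwinding this recursion from the base cases $\beta_i = \binom{n}{i}$ for $i < d$ gives precisely the alternating sum defining $b_i$ in the range $0 \le i \le \frac{n+d}{2}$. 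For the upper range $\frac{n+d}{2} < i \le \frac{n+d}{2}+d$, the numerator of $p(t)$ was built exactly so that $\beta_i$ picks up the value $\sum_{k\ge 1,\, i+kd\le n}(-1)^{k+1}\binom{n}{i+kd}$, matching the second case of $b_i$; I would check the single block $\frac{n+d}{2}<i\le \frac{n+d}{2}+d$ directly against the formula for $t^{\lfloor(n+d)/2+1\rfloor}p(t)$ and then confirm that the recursion propagates the same closed form to all larger $i$ up to $n$.

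The translation of the parity correction is the one genuinely fiddly point, so I would treat it carefully. In Corollary \ref{cor:mainthm_HF} the extra $+1$ is added at position $i = \frac{n+d}{2}$ precisely when $\frac{n-d}{2}$ and $b_{(n-d)/2}$ are both odd, and here $C(t) = t^{(n+d)/2}$ is switched on precisely when $\frac{n-d}{2}$ is an odd integer and the coefficient of $t^{(n-d)/2}$ in $B(t)$ is odd. Having already identified that coefficient with $b_{(n-d)/2}$ in the previous paragraph, these two triggering conditions coincide, and the locations and magnitudes of the added terms agree. Thus $C(t)$ is the generating function of $\bm{c}$, completing the identification $B(t)+C(t) = \sum_i a_i t^i = \sum_i (b_i+c_i)t^i$.

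The hardest part, I expect, is the bookkeeping around the floor $\lfloor (n+d)/2 + 1\rfloor$ and the exact index window $\frac{n+d}{2}<i\le \frac{n+d}{2}+d$ in the definition of $p(t)$: one must confirm that the lowest power of $t$ actually appearing in $t^{\lfloor(n+d)/2+1\rfloor}p(t)$ is strictly larger than $\frac{n+d}{2}$ (so it does not disturb the lower range) while still being low enough that the recursion $\beta_i = \binom{n}{i} - \beta_{i-d} + (\text{polynomial term})$ hands off cleanly into the symmetric upper range. Since $n+d$ may be even or odd depending on the parities of $n$ and $d$, this requires a short parity case-split, but no substantive new idea beyond the duality relation $b_{(n+d)/2} = \binom{n}{(n+d)/2} - b_{(n-d)/2}$ already extracted inside the proof of Corollary \ref{cor:mainthm_HF}. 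With the coefficientwise agreement secured, the statement follows at once.
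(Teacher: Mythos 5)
Your proposal is correct and follows essentially the same route as the paper: both reduce the corollary to Corollary \ref{cor:mainthm_HF} by verifying that the coefficients of $B(t)$ are exactly the $b_i$ via the relation $(1+t^d)B(t)=t^{\lfloor(n+d)/2+1\rfloor}p(t)+(1+t)^n$ (the paper multiplies $\sum_i b_i t^i$ by $1+t^d$ and matches the numerator, while you unwind the same identity as a recursion on the $\beta_i$ --- a purely cosmetic difference), and both then observe that the trigger condition for $C(t)$ coincides with that for $\bm{c}$ once the coefficient of $t^{(n-d)/2}$ in $B(t)$ is identified with $b_{(n-d)/2}$. The bookkeeping you flag at the end is indeed the only delicate point (in particular, when $n<3d$ the window $\frac{n+d}{2}<i\le\frac{n+d}{2}+d$ reaches degrees beyond $n$, where the numerator's coefficients must match $b_{i-d}$ so that the division by $1+t^d$ is exact and $B(t)$ is a polynomial of degree at most $n$), and it checks out exactly as you anticipate.
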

Notice also that $p(t)$ is divisible by $(1+t)$, since both $1+t^d$ and $(1+t)^n$ are so, when $d$ is odd. 
\begin{proof}
Let $b_0, \ldots, b_n$ be defined as in Corollary \ref{cor:mainthm_HF}. Notice that 
\[
 b_i+b_{i-d} = \binom{n}{i}, \ \mbox{when} \ d \le i \le \frac{n+d}{2}, \ \ \mbox{or} \ \ \frac{n+d}{2}+d <i \le n. 
\]
Also, $b_i=\binom{n}{i}$ when $i \le d$. For $(n+d)/2<i \le (n+d)/2+d$ we get
\begin{align*}
  b_i+b_{i-d} &=\sum_{\substack{k \ge 1 \\ kd+i \le n}}(-1)^{k+1} \binom{n}{i+kd} + \sum_{\substack{k \ge 1 \\ i-kd \ge 0}}(-1)^{k+1} \binom{n}{i-kd}\\
  &= \sum_{\substack{k \in \mathbb{Z} \\ 0 \le i+kd \le n}}\!\!\!\!\!\! (-1)^{k+1}\binom{n}{i+kd} \ + \binom{n}{i}.
\end{align*}
Then
\begin{align*}
 \Big( \sum_{i=0}^n b_i t^i \Big) (1+t^d) &= \sum_{i=0}^{d-1} b_i + \sum_{i=d}^n(b_i+b_{i-d})t^i \\
 &= \sum_{\frac{n+d}{2}<i\le \frac{n+d}{2}+d} \! \Big( \! \sum_{\substack{k \in \mathbb{Z} \\ 0 \le i+kd \le n}} \!\! (-1)^{k+1}\binom{n}{i+kd} \Big) t^i  + \sum_{i=0}^n \binom{n}{i}t^i \\
 &= t^{\lfloor(n+d)/2+1 \rfloor}p(t)+(1+t)^n,
\end{align*}
and hence $B(t)=\sum_{i=0}^n b_i t^i$. It follows from Corollary \ref{cor:mainthm_HF} that 
\[\HS(E/(f),t) \lexge  B(t) + C(t). \qedhere \]
\end{proof}
See Lemma \ref{lemma:series_multisect_poly} for an explicit expression for $p(t)$, in the case $d=3$. 

\begin{remark} \label{remark:ann}
Notice that we have equality in Corollary \ref{cor:mainthm_HF} and Corollary \ref{cor:mainthm_ratseries} if and only if 
\[
 \dim(\ann(f)_i)=\dim((f)_i)+c_i, \ \mbox{for} \ i \le \frac{n-d}{2}, \ \mbox{where}
\]
\[
 c_i=\begin{cases}
    1 & \text{if} \ i=\frac{n-d}{2}, i \ \text{is odd, and} \ \dim((f)_i) \not \equiv \binom{n}{i} \mod 2 \\
    0 & \text{otherwise.} 
   \end{cases}
\]
\end{remark}

\section{The minimal Hilbert series of $E/(f)$}\label{sec:sharp_bounds}
When $d=n$ or $d=n-1$ the bound in Corollary \ref{cor:mainthm_HF} is equal to the minimal Hilbert series, which is trivial since $\frac{n-d}{2}<1$ in both cases. We will now see that the bound is equal to the minimal Hilbert series also in the cases $d=n-2$ and $d=n-3$. 

\begin{proposition}\label{prop:n-2}
Let $n$ be odd and let $f$ be the form
\[
 f=\sum_{1\le i_1 <i_2 < \dots < i_{n-2} \le n}\!\!\!\!\!\!\!\!x_{i_1}x_{i_2} \cdots x_{i_{n-2}}.
\]
 of degree $n-2$ in $E$. Then 
$$HS(E/(f),t)= 1 + nt + \binom{n}{2}t^2 + \cdots + \binom{n}{n-3}t^{n-3} + \left(\binom{n}{n-2} -1\right)t^{n-2} + t^{n-1},$$
each coefficient agreeing with the lower bound in Corollary \ref{cor:mainthm_HF}.
\end{proposition}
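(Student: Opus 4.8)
=== PROOF PROPOSAL ===

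\textbf{Setup and goal.}
The plan is to show that for this explicit form $f$ of degree $n-2$ (with $n$ odd), the Hilbert series of $E/(f)$ equals the stated polynomial, and that this matches the lower bound from Corollary \ref{cor:mainthm_HF}. Since $d=n-2$, we have $\frac{n-d}{2}=1$, so by Remark \ref{remark:ann} the bound is attained precisely when $\dim(\ann(f)_1)=\dim((f)_1)+c_1$, where $c_1=1$ exactly when the relevant parity condition at $i=1$ holds. Because $n$ is odd, $\frac{n-d}{2}=1$ is odd, so the correction term $c_1$ is governed by whether $\dim((f)_1)\not\equiv\binom{n}{1}=n\pmod 2$. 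Thus the heart of the matter is to pin down the rank of the single map $\cdot f:E_1\to E_{n-1}$, equivalently $\dim\ker(\cdot f:E_1\to E_{n-1})$.

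\textbf{Reduction via Proposition \ref{prop:hilb_E/f} and Remark \ref{rmk:dibag}.}
First I would invoke Proposition \ref{prop:hilb_E/f}: the whole Hilbert function is determined by its values for $i\le\frac{n+d}{2}=n-1$, and in the range $i<d=n-2$ the quotient agrees with $E$ itself (nothing is killed below degree $d$), giving the binomial coefficients $\binom{n}{i}$ for $i\le n-3$. So the only non-trivial degrees to analyze are $i=n-2$ and $i=n-1$. By the duality/symmetry of Proposition \ref{prop:hilb_E/f}, the coefficient in degree $n-1$ is determined by that in degree $1$, and the coefficient in degree $n-2$ is determined by the rank of $\cdot f:E_1\to E_{n-1}$. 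Concretely, $\HF(E/(f),n-2)=\binom{n}{n-2}-\rank(\cdot f:E_0\to E_{n-2})=\binom{n}{n-2}-1$ (the degree-zero map just sends $1\mapsto f\neq 0$), and $\HF(E/(f),n-1)=\binom{n}{n-1}-\rank(\cdot f:E_1\to E_{n-1})=n-\rank$.

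\textbf{The key computation: the kernel of $\cdot f:E_1\to E_{n-1}$.}
The main obstacle, and the crux of the proof, is to show that this map has a one-dimensional kernel, so that its rank is $n-1$ and $\HF(E/(f),n-1)=n-(n-1)=1$. Here I expect the explicit symmetric structure of $f$ (the sum of \emph{all} squarefree monomials of degree $n-2$) to be essential. The clean way is to exhibit the kernel element directly: I would compute $\ell f$ for $\ell=\sum_i \lambda_i x_i$ and determine which linear combinations annihilate $f$. Since $\hat x_I$ ranges over all degree-$2$ monomials as $I$ ranges over all $(n-2)$-subsets, multiplying $f$ by $x_j$ sends each term $x_I$ with $j\notin I$ to $\pm x_I x_j$, a monomial of degree $n-1$; collecting coefficients degree-$(n-1)$-monomial by monomial reduces $\ell f=0$ to a homogeneous linear system in the $\lambda_i$. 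By the symmetry of $f$ under permuting the variables, I expect the kernel to be spanned by a single symmetric vector such as $\sum_i x_i$ (up to sign twists coming from $\sigma$), and I would verify this directly. Alternatively, and more in the spirit of the paper, Remark \ref{rmk:dibag} already guarantees that \emph{every} element of $E_{n-2}$ has a linear factor when $n$ is odd, so $f=x_1 f'$ after a change of basis and the map $\cdot f:E_1\to E_{n-1}$ automatically has $x_1$ in its kernel; the task then narrows to showing the kernel is \emph{exactly} one-dimensional, i.e.\ that $f$ is not divisible by a two-dimensional space of linear forms, which follows because $f$ as written is not divisible by two independent linear forms (one checks that $x_i x_j \mid f$ fails, since $f$ contains monomials avoiding any given pair of indices).

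\textbf{Assembling the series and matching the bound.}
Once $\rank(\cdot f:E_1\to E_{n-1})=n-1$ is established, every coefficient is determined: degrees $0,\dots,n-3$ give $\binom{n}{i}$, degree $n-2$ gives $\binom{n}{n-2}-1$, degree $n-1$ gives $1$, and degree $n$ gives $0$ (consistent with $\HF(E/(f),n)=\binom{n}{n}-\rank(\cdot f:E_{2}\to E_{n})$, which vanishes since the target $E_n$ is one-dimensional and hit by $x_ix_j\cdot f$). Finally, to confirm this is the lower bound, I would check that $\dim((f)_1)=\rank(\cdot f:E_1\to\cdots)$ wait --- rather, $\dim((f)_1)=\dim E_1\cdot f$; here $\dim(\ann(f)_1)=\dim\ker(\cdot f:E_1\to E_{n-1})=1$ while $\dim((f)_1)=\rank(\cdot f:E_1\to E_{1+d})$, and since $1+d=n-1$ this is the same map, giving $\dim((f)_1)=n-1$. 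Then $\dim((f)_1)=n-1\not\equiv n=\binom{n}{1}\pmod 2$, so by Remark \ref{remark:ann} we have $c_1=1$ and $\dim(\ann(f)_1)=\dim((f)_1)+c_1=(n-1)+1=n$ wait --- this needs the correct bookkeeping, so I would carefully recompute $\dim(\ann(f)_1)$ versus $\dim((f)_1)+c_1$ to confirm the equality-in-the-bound condition of Remark \ref{remark:ann} holds exactly, thereby certifying that the computed series coincides coefficient-wise with Corollary \ref{cor:mainthm_HF}.
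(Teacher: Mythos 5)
There is a genuine gap at the crux of your argument. You correctly reduce everything to showing that $\ker(\cdot f : E_1 \to E_{n-1})$ is \emph{exactly} one-dimensional (equivalently, rank $n-1$), which is also the paper's reduction, but you never actually establish this. Your Route A (``I expect the kernel to be spanned by \dots and I would verify this directly'') is a stated intention, not a proof; the paper carries out precisely this verification by writing the explicit $n \times n$ matrix of $\cdot f$ with respect to the bases $\{x_1,\ldots,x_n\}$ and $\{\hat{x}_1,\ldots,\hat{x}_n\}$ (zero diagonal, $\pm 1$ off the diagonal) and row-reducing it to exhibit rank $n-1$. Your Route B is logically flawed: if the kernel had dimension $\ge 2$, then after a linear change of coordinates $f$ would be divisible by a product $\ell_1\ell_2$ of two independent \emph{linear forms}, which need not be a monomial $x_ix_j$ in the original coordinates. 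Checking that no monomial $x_ix_j$ divides $f$ (because $f$ contains the monomial complementary to $\{i,j\}$) therefore does not rule out a two-dimensional kernel. Indeed, the one kernel element that does exist is $x_1+\cdots+x_n$, not a variable, which already shows that annihilating linear forms of this $f$ are not monomials, so a monomial-divisibility check cannot control the kernel.

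A secondary, fixable defect is the bookkeeping against Remark \ref{remark:ann}, which you flag yourself with ``wait'' but leave unresolved. Since the ideal $(f)$ lives in degrees $\ge d = n-2$, one has $(f)_1 = \{0\}$, so $\dim((f)_1) = 0$, not $n-1$ (you conflated $\dim((f)_1)$ with $\dim((f)_{1+d}) = \rank(\cdot f: E_1 \to E_{n-1})$). The correct accounting: $0 = \dim((f)_1) \not\equiv n = \binom{n}{1} \pmod 2$ because $n$ is odd, so $c_1 = 1$, and equality with the bound requires exactly $\dim(\ann(f)_1) = 0 + 1 = 1$ --- that is, the rank statement you left unproven. (The paper phrases the same bookkeeping via Corollary \ref{cor:mainthm_HF} directly: $b_i = \binom{n}{i}$ for $i \le n-3$, $b_{n-2} = \binom{n}{n-2} - 1$, $b_{n-1} = 0$, and $c_{n-1} = 1$ since $\frac{n-(n-2)}{2} = 1$ and $b_1 = n$ are both odd.) So your overall skeleton matches the paper, but the single substantive step of the proof is missing, and the one argument you offer for it would fail.
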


\begin{proof}
Since $n$ is odd, there is an element $\ell \in E_1$ such that $\ell f =0$, as we saw in Remark \ref{rmk:dibag}. The bound in Corollary \ref{cor:mainthm_HF} is sharp if the degree one annihilators of $f$ are all multiples of the same element $\ell$.  This is equivalent to the the multiplication map $\cdot f :E_1 \to E_{n-1}$ having rank $n-1$. 

Let us choose the basis $\{x_1, \ldots, x_n\}$ for $E_1$, and $\{ \hat{x}_1, \ldots, \hat{x}_n\}$ for $E_{n-1}$. 
The matrix of $\cdot f :E_{1} \to E_{n-1}$ w. r. t. these bases is
\[
 \begin{pmatrix}
  0 	& 1 	& -1 	& 1	& \cdots & -1 \\
  1	& 0 	& -1	& 1	& \cdots & -1 \\
  1	& -1 	& 0	& 1	& \cdots & -1 \\
  1	 & -1	& 1	& 0	& 	& \vdots \\
  \vdots & 	& 	&  & \ddots	& -1 \\
  1 & 	& 	&  & 	& 0 
 \end{pmatrix}
\]
Subtracting row $i+1$ from row $i$, for $i=1, \ldots n-1$, gives the matrix
\[
 \begin{pmatrix}
 -1 	& 1 	& 0	&0	& \cdots & 0 \\
 0 	& 1 	& -1	& 0	&\cdots	& 0 \\
 0 	& 0 	& -1	& 1	&	& \vdots \\
 \vdots &	 & 	& \ddots & \ddots &  0\\
	0&	 & 	& 	 & 1 & -1 \\
	1&\cdots & 	& \cdots & 1 & 0 \\
 
 \end{pmatrix}
\]
which has rank $n-1$.

Using the notation from Corollary \ref{cor:mainthm_HF}, we have
$$b_i =
\displaystyle \sum_{\substack{k \ge 0 \\ k (n-2) \le i}}(-1)^k \binom{n}{i-k(n-2)}  = 
\begin{cases}
	 \binom{n}{i}	&\mbox{if }  i \leq n-3\\
	 \binom{n}{i}- \binom{n}{i-(n-2)}	= \binom{n}{n-2} - 1 &\mbox{if }  i=n-2\\
	  \binom{n}{i}- \binom{n}{i-(n-2)}	= 0 & \mbox{if }  i=n-1,
\end{cases}
$$
and
$b_i = \displaystyle \sum_{\substack{k \ge 1 \\ k(n-2)+i \le n}}(-1)^{k+1} \binom{n}{i+k(n-2)} =0$ when $\frac{n+(n-2)}{2}=n-1<i \le n.$

We also have $c_{n-1} =  1$ since both $\frac{(n-(n-2))}{2} = 1$ and $b_{\frac{n-(n-2)}{2}} = n$ are odd, so the series coincides with the lower bound in Corollary \ref{cor:mainthm_HF}.

 \end{proof}
 
\begin{proposition}\label{prop:n-3}
Let $n$ be even and let $f$ be the form 
\begin{align*}
 f = &x_1  \cdots x_{n−3} + x_2  \cdots x_{n−2} + x_3 \cdots x_{n−1}+  x_4 \cdots x_{n} +\\
 & \quad + x_5 \cdots x_{n}x_1 +\cdots+ x_{n-1}x_n x_1 \cdots x_{n−5} + x_n x_1 x_2 \cdots x_{n−4}
\end{align*}

of degree $n-3$ in $E$. Then 
\begin{align*}  &HS(E/(f),t)= \\
&1 + nt + \binom{n}{2}t^2 + \cdots + \binom{n}{n-4}t^{n-4} + \! \left(\!\!\binom{n}{n-3}-1 \!\right)\!t^{n-3} + \!\left(\!\! \binom{n}{n-2} -n\! \right)\!t^{n-2}, \end{align*}
each coefficient agreeing with the lower bound in Corollary \ref{cor:mainthm_HF}.
\end{proposition}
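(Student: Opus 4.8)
The plan is to mirror the proof of Proposition \ref{prop:n-2}: first pin down the lower bound for $d=n-3$, and then show the given $f$ attains it by controlling a single multiplication map. I would begin by noting that here $\frac{n-d}{2}=\frac{3}{2}$ is \emph{not} an integer, so the correction term vanishes ($c_i=0$ for all $i$) and the bound $\bm a$ of Corollary \ref{cor:mainthm_HF} equals $\bm b$. A direct evaluation of $b_i=\sum_{k\ge 0,\,k(n-3)\le i}(-1)^k\binom{n}{i-k(n-3)}$ gives $b_i=\binom{n}{i}$ for $i\le n-4$, then $b_{n-3}=\binom{n}{n-3}-1$ and $b_{n-2}=\binom{n}{n-2}-n$, while $b_{n-1}=b_n=0$ (from the second branch, valid for $i>\frac{n+d}{2}=n-\frac{3}{2}$, using $n\ge 6$). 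This is precisely the claimed series, so the task is to prove that $f$ realizes equality.

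By Remark \ref{remark:ann}, equality holds if and only if $\ann(f)_i=(f)_i$ for $i\le\frac{n-d}{2}=\frac{3}{2}$, i.e.\ for $i=0$ and $i=1$. The case $i=0$ is immediate, and $(f)_1=0$ since $\deg f=n-3\ge 3$; hence everything reduces to showing $\ann(f)_1=0$, that is, that the map $\cdot f:E_1\to E_{n-2}$ is injective. Once this is in hand, Theorem \ref{thm:main} (with $r=1$) yields $\HF(E/(f),i)=b_i$ for all $i\le r+d=n-2$, and the two remaining values $\HF(E/(f),n-1)=\HF(E/(f),n)=0$ follow from the symmetry in Proposition \ref{prop:hilb_E/f}.

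To analyze $\cdot f:E_1\to E_{n-2}$, I would read $f=\sum_{j=1}^n c_j$, where $c_j=x_jx_{j+1}\cdots x_{j+n-4}$ is the product of the $n-3$ cyclically consecutive variables starting at $j$ (indices mod $n$); its complement is the triple of consecutive indices $\{j-3,j-2,j-1\}$. Thus $x_i c_j\ne 0$ exactly when $i\in\{j-3,j-2,j-1\}$, so for each $i$ only $j\in\{i+1,i+2,i+3\}$ contribute and $x_i f=x_i c_{i+1}+x_i c_{i+2}+x_i c_{i+3}$. Expressing the image in the basis $\widehat{\mathbb{B}}_{n-2}$ indexed by the complementary $2$-subsets $K$ (via $\sigma(K)\hat{x}_K$), one computes that $x_i f$ is, up to signs, a combination of the three basis vectors indexed by $\{i-2,i-1\}$, $\{i-1,i+1\}$, and $\{i+1,i+2\}$, coming from $j=i+1,i+2,i+3$ respectively.

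The key observation — and the only step needing real thought — is that the ``gap-$2$'' pair $\{i-1,i+1\}$ is attached uniquely to $i$. As $i$ ranges over $1,\dots,n$ (with $n\ge 6$) these $n$ pairs are distinct, and $\{i-1,i+1\}$ occurs in $x_{i'}f$ only for $i'=i$; the other two pairs $\{i-2,i-1\}$ and $\{i+1,i+2\}$ are adjacent pairs, shared between two different rows, but they are irrelevant to what follows. Consequently the $n\times n$ submatrix of the matrix of $\cdot f$ formed by the columns indexed by the gap-$2$ pairs is diagonal with entries $\pm 1$, so $\rank(\cdot f:E_1\to E_{n-2})=n$ and the map is injective. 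The only thing to watch in writing this out is the bookkeeping of the cyclic complements and the $\sigma$-signs; because each gap-$2$ column receives a single nonzero contribution, no cancellation can occur and the precise signs never enter the rank computation. This completes the argument.
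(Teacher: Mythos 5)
Your proposal is correct and is essentially the paper's own argument: the reduction via Remark \ref{remark:ann} to showing $\ann(f)_1=\{0\}$ is identical, and your key observation that the gap-$2$ pair $\{i-1,i+1\}$ picks out exactly one cyclic summand of $f$ is precisely the content of the paper's computation $x_1x_{n-1}f=\pm\hat{x}_n,\ x_2x_nf=\pm\hat{x}_1,\ \ldots,\ x_nx_{n-2}f=\pm\hat{x}_{n-1}$. The only cosmetic difference is that you read this off as a diagonal $n\times n$ submatrix of $\cdot f\colon E_1\to E_{n-2}$ to get injectivity directly, whereas the paper phrases the same fact as surjectivity of $\cdot f\colon E_2\to E_{n-1}$ and passes to the transpose via Proposition \ref{prop:dual_rank}; your explicit evaluation of $b_i$ and of the top two coefficients spells out what the paper leaves as ``a simple calculation''.
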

\begin{proof}
By Remark \ref{remark:ann}, we need to prove that $\ann(f)_1=\{0\}$. Notice that 
$$x_1 x_{n-1} f = \pm \hat{x}_n, \ x_2 x_n  f = \pm \hat{x}_1,\  x_3 x_1 f = \pm \hat{x}_2,  \ldots, \ x_n x_{n−2}f = \pm \hat{x}_{n−1}.$$
This proves that $\cdot f: E_2 \to E_{n-1}$ is surjective, which by Proposition \ref{prop:dual_rank} is equivalent to $\cdot f: E_1 \to E_{n-2}$ being injective.

A simple calculation shows that the series agrees with the lower bound.
\end{proof}
\begin{remark}
The next case to consider would be $d = n-4$, but we have not been able to find a polynomial giving the required series. The corresponding cyclic 
polynomial from the case $d = n-3$ gives the correct series when $n \leq 7$, but for $n \geq 8$, the form is far from giving a minimal series. Indeed, when $n = 8$, the kernel of the map
$f: E_2 \to E_6$ has dimension six.
\end{remark}

We now turn to the exceptional case $(n,d) = (9,3)$, where our lower bound does not apply.

\begin{proposition}\label{prop:n9d3}
Let $E$ be the exterior algebra on nine variables, and let $f = 2p_1 + 2p_2 + p_3 + p_4$, where
\begin{align*}
p_1  &= x_1 x_2 x_3 + x_4 x_5 x_6 + x_7 x_8 x_9, \\
p_2  &= x_1 x_4 x_7 + x_2 x_5 x_8 + x_3 x_6 x_9, \\
p_3  &= x_1 x_5 x_9 + x_2 x_6 x_7 + x_3 x_4 x_8, \\
p_4 	&= x_1 x_6 x_8 + x_2 x_4 x_9 + x_3 x_5 x_7.
\end{align*}
Then 
\begin{align*}
&HS(E/(f),t) =\\ &1 + \binom{9}{1} t + \binom{9}{2} t^2 +  \left( \binom{9}{3} -1 \right) t^3 +  \left( \binom{9}{4} - \binom{9}{1} \right) t^4  + \left( \binom{9}{5} - \binom{9}{2} \right) t^5 +  4 t^6,
\end{align*}
and this is the minimal Hilbert series of $E/(f)$, for a form $f$ of degree $3$. 
\end{proposition}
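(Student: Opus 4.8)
The plan is to reduce the computation of the whole Hilbert series to the ranks of the four maps $\cdot f : E_i \to E_{i+3}$ for $i = 0,1,2,3$, and then to treat the only subtle one, $\cdot f : E_3 \to E_6$, using the structure of $f$ as a trivector. By Proposition \ref{prop:hilb_E/f} the function $\HF(E/(f),\cdot)$ is determined by its values in degrees $\le \frac{n+d}{2} = 6$, and for $i\ge 3$ these are $\binom{9}{i} - \rank(\cdot f : E_{i-3}\to E_i)$. Thus it suffices to show that the maps out of $E_0, E_1, E_2$ have maximal rank $1, 9, 36$ (equivalently $\ann(f)_1 = \ann(f)_2 = 0$) and that $\dim\ann(f)_3 = \dim\ker(\cdot f : E_3 \to E_6) = 4$; the coefficients in degrees $\ge 7$ are then forced by the symmetry in Proposition \ref{prop:hilb_E/f}, and one checks they vanish.

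The heart of the matter is the degree-three kernel. First I would establish the lower bound $\dim\ann(f)_3 \ge 4$ directly: arranging $x_1,\dots,x_9$ in a $3\times 3$ array, the four summands $p_1,p_2,p_3,p_4$ are exactly the four parallel classes of lines of the affine plane $AG(2,3)$, and any two lines from distinct classes meet in a single point. Hence each monomial of $p_i$ shares a variable with each monomial of $p_j$ for $i \ne j$, so $p_i \wedge p_j = 0$ for all $i,j$ (including $i = j$, by odd degree). Consequently $f \wedge p_k = 0$ for every $k$, and since $p_1,\dots,p_4$ involve pairwise disjoint monomials they are linearly independent, giving $\langle p_1,\dots,p_4\rangle \subseteq \ann(f)_3$ of dimension four. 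This already forces $\HF(E/(f),6) = \dim\ann(f)_3 \ge 4$, strictly above the value $2$ predicted by the general lower bound in Corollary \ref{cor:mainthm_HF}, which is why $(9,3)$ is exceptional.

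For the matching upper bound I would invoke the classification of trivectors in $\bigwedge^3\mathbb{C}^9$ from \cite{Vinberg}, viewing $\bigwedge^3 V$ as the degree-one part $\mathfrak{g}_1$ of the $\mathbb{Z}/3$-graded Lie algebra of type $E_8$, with $\mathfrak{g}_0 = \mathfrak{sl}(V)$ and bracket $[\cdot,\cdot]:\mathfrak{g}_1 \times \mathfrak{g}_1 \to \mathfrak{g}_2 \cong \bigwedge^6 V$ given by the wedge product. Under this identification the relations $p_i \wedge p_j = 0$ say precisely that $\mathfrak{c} := \langle p_1,\dots,p_4\rangle$ is an abelian subspace, and being four-dimensional (the rank of this graded algebra) and spanned by semisimple elements it is a Cartan subspace. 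A generic element of $\mathfrak{c}$, such as $f = 2p_1 + 2p_2 + p_3 + p_4$, is then regular semisimple, and by Vinberg's theory its centralizer in $\mathfrak{g}_1$ — which is exactly $\ker(\cdot f : E_3 \to E_6) = \ann(f)_3$ — equals $\mathfrak{c}$. This gives $\dim\ann(f)_3 = 4$. The same regularity yields $\ann(f)_1 = \ann(f)_2 = 0$, which can alternatively be confirmed by a direct rank computation in Macaulay2 \cite{M2}; combined with Proposition \ref{prop:hilb_E/f} this produces the stated series.

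Finally, for minimality I would argue that the series just computed is the generic one. The regular semisimple elements of $\bigwedge^3 V$ form a dense open, irreducible subset $R$, and since $R = SL(V)\cdot\mathfrak{c}^{\mathrm{reg}}$ and each rank $\rank(\cdot g : E_i \to E_{i+3})$ is invariant under the $SL(V)$-action on $E$ (a linear change of variables preserves the Hilbert series of $E/(g)$), all four ranks are constant on $R$, equal to the values computed above. The locus on which $\HS(E/(g),t)$ attains its minimum is also dense open, so it meets $R$; hence the minimal series coincides with the series on $R$, which is the series of our $f$. The main obstacle is the upper bound $\dim\ann(f)_3 \le 4$: the lower bound and the low-degree injectivities are elementary or computational, but ruling out a larger kernel genuinely requires identifying $f$ with a regular semisimple element of a Cartan subspace and quoting the centralizer description from Vinberg's classification.
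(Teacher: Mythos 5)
Your overall architecture is sound, and most of it runs parallel to the paper's proof: your $AG(2,3)$ parallel-class observation that $p_ip_j=0$ is exactly the paper's ``simple calculation'' producing the four-dimensional subspace $\langle p_1,p_2,p_3,p_4\rangle\subseteq\ann(f)_3$, and your closing minimality argument (density and irreducibility of the regular semisimple locus, $SL(V)$-invariance of the ranks, openness of the minimal-series locus via Theorem \ref{thm:A}) is in substance the paper's argument, with your set $R$ playing the role of the paper's Zariski-open set $\Omega$. Where you genuinely diverge is the upper bound $\dim\ann(f)_3\le 4$: you want to deduce it from Vinberg theory, identifying $\ann(f)_3$ with the centralizer in $\mathfrak{g}_1$ of a regular semisimple element and quoting that this centralizer equals the Cartan subspace $\mathfrak{c}$; the paper instead computes the full series of the specific form in Macaulay2 and invokes \cite{Vinberg} only to bound the \emph{generic} degree-six coefficient from below, by intersecting $\Omega$ with the open set of forms of minimal series.

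This divergence is where your proof has a real gap, and it sits at the load-bearing step: the phrase ``a generic element of $\mathfrak{c}$, such as $f=2p_1+2p_2+p_3+p_4$'' asserts without verification that this particular coefficient vector is regular. Not every nonzero element of $\mathfrak{c}$ is regular --- $p_1$ itself lies in $\mathfrak{c}$, and its degree-three annihilator is far larger than $\mathfrak{c}$ --- so the centralizer description applies only after one checks that $(2,2,1,1)$ avoids the non-regular locus. This is precisely what the explicit inequalities defining the set $A$ in the paper (taken from Vinberg--Ela\v svili) encode, and the check does succeed: $\lambda_1\lambda_2\lambda_3\lambda_4=4\neq 0$, $(\lambda_2^3+\lambda_3^3+\lambda_4^3)^3-(3\lambda_2\lambda_3\lambda_4)^3=10^3-6^3\neq 0$ (and likewise with $\lambda_1$ in place of $\lambda_2$), and $(\lambda_1^3+\lambda_2^3+\lambda_3^3)^3-(3\lambda_1\lambda_2\lambda_3)^3=17^3-12^3\neq 0$ (and likewise with $\lambda_4$ in place of $\lambda_3$). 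So the gap is fillable, either by inserting this verification or by the Macaulay2 rank computation you relegate to degrees one and two --- note that the paper's route needs no genericity check of the concrete $f$ at all, which is exactly why it lets the computer handle the specific form. One further point should be made explicit rather than attributed vaguely to ``the same regularity'': $\ann(f)_1=\ann(f)_2=0$ does follow from your degree-three statement, since $\ell f=0$ with $\ell\neq 0$ would give $\ell\wedge E_2\subseteq\ann(f)_3$ of dimension $\binom{8}{2}=28>4$, and $qf=0$ with $0\neq q\in E_2$ would give $q\wedge E_1\subseteq\ann(f)_3$ of dimension at least $7>4$; with that sentence and the regularity check added, your proof is complete.
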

\begin{proof}
Following Vinberg-Ela\v svili \cite{Vinberg}, let
$A$ be the set of all $(\lambda_1,\lambda_2,\lambda_3,\lambda_4)\in \mathbb{C}^4$ such that 
\begin{align*}
\lambda_1 \lambda_2 \lambda_3 \lambda_4 &\neq 0 \\
(\lambda_2^3 + \lambda_3^3 + \lambda_4^3)^3 - (3\lambda_2 \lambda_3 \lambda_4)^3 &\neq 0,\\
(\lambda_1^3 + \lambda_3^3 + \lambda_4^3)^3 - (3\lambda_1 \lambda_3 \lambda_4)^3 &\neq 0,\\
(\lambda_1^3 + \lambda_2^3 + \lambda_4^3)^3 - (3\lambda_1 \lambda_2 \lambda_4)^3 &\neq 0,\\
(\lambda_1^3 + \lambda_2^3 + \lambda_3^3)^3 - (3\lambda_1 \lambda_2 \lambda_3)^3 &\neq 0.
\end{align*}
Let $\Omega$ be the set of all elements in $E_3$ that, after a linear change of variables, can be written on the form 
\[
 \lambda_1p_1+\lambda_2p_2+\lambda_3p_3+\lambda_4p_4, \ \mbox{with} \ (\lambda_1, \lambda_2, \lambda_3, \lambda_4) \in A.
\]
It is proved in \cite{Vinberg}, that $\Omega$ is a Zariski-open subset of $E_3$. By Theorem \ref{thm:A}, the set of forms of degree 3 with minimal Hilbert series is also a Zariski-open subset, so there is an element 
$f$ in the intersection of the two sets. A simple calculation shows that if $f$ is any linear combination of $p_1,p_2,p_3$ and $p_4$, then 
$$p_1 f = p_2 f = p_3 f = p_4 f = 0.$$ Hence the dimension of $E/(f)$ is at least 4 in degree $6$.

On the other hand, a calculation in Macaulay2 shows that $E_9/(2p_1 + 2p_2 + p_3 + p_4)$ has series $1 + 9t + \binom{9}{2}t^2 +  \left(\binom{9}{3} -1 \right)t^3 + \left(\binom{9}{4}  - 9 \right)t^4 + \left(\binom{9}{5} - \binom{9}{2} \right)t^5 + 4 t^6.$ This series is minimal up to degree $5$, by Theorem \ref{thm:main}, and is minimal in degree $6$ by the argument above. This proves the statement. 
\end{proof}

\begin{proposition}\label{prop:smalld}
The lower bound for the Hilbert series of $E/(f)$, where $f$ is of odd degree $d$, given in Corollary \ref{cor:mainthm_HF}, is equal to the minimal Hilbert series in the cases
\begin{itemize}
\item $d=1$,
 \item $d=3$ and $n\le 17$, $n \neq 9$
 \item $d=5$ and $n \le 13$,
 \item $d=7$ and $n \le 11$.
\end{itemize}
\end{proposition}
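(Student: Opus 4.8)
The overall plan rests on the observation that Corollary \ref{cor:mainthm_HF} bounds \emph{every} form from below, so to prove that the bound coincides with the minimal series it suffices, in each listed case, to exhibit a single form that attains it. Write $\bm{a}=\bm{b}+\bm{c}$ for the bound and let $\bm{m}$ denote the coefficient-wise minimal series, i.e.\ the generic series. By Corollary \ref{cor:mainthm_HF} we have $\bm{m} \lexge \bm{a}$. If some explicit form $f_0$ realizes $\HS(E/(f_0),t)=\bm{a}$, then minimality gives $\bm{m}\le \bm{a}$ coefficient-wise; at the first index where the two sequences could differ, the lexicographic inequality would force a coefficient of $\bm{m}$ to be strictly larger than the corresponding coefficient of $\bm{a}$, contradicting $\bm{m}\le\bm{a}$. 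Hence $\bm{m}=\bm{a}$, and the proposition reduces to producing one witness form per case.

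For $d=1$ I would argue by hand. After a linear change of variables a generic linear form is $f=x_1$, and $E/(x_1)\cong \bigwedge(x_2,\dots,x_n)$ has Hilbert series $(1+t)^{n-1}$. The binomial identity $\sum_{k\ge 0}(-1)^k\binom{n}{i-k}=\binom{n-1}{i}$ shows this equals $\bm{b}$ for $i\le\frac{n+1}{2}$, and the symmetry relation of Proposition \ref{prop:hilb_E/f} then matches the remaining coefficients; since $x_1$ attains $\bm{b}$ while every form satisfies $\bm{h}\lexge\bm{b}+\bm{c}$, the correction term $\bm{c}$ must vanish in this case. Thus $x_1$ is the required witness.

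For the finite families ($d=3$, $n\le 17$, $n\ne 9$; $d=5$, $n\le 13$; $d=7$, $n\le 11$) I would produce a witness by direct computation in Macaulay2 \cite{M2}. By Remark \ref{remark:ann} together with the remark following Proposition \ref{prop:hilb_E/f}, a form attains the bound as soon as the maps $\cdot f:E_{i-d}\to E_i$ have the prescribed maximal ranks for all $i\le\frac{n+d}{2}$; so only the ranks up to the middle degree need to be checked, roughly halving the work. For each pair $(n,d)$ I would fix a convenient sparse or random form $f$ with integer coefficients (note that $h_d$ cannot serve here, as it has a linear factor when $d$ is odd) and verify these ranks. To make the verification rigorous while keeping it light, I would reduce modulo a suitable prime $p$: a maximal minor that is nonzero in $\mathbb{F}_p$ is nonzero over $\mathbb{Z}$, hence over $\mathbb{C}$, so by lower semicontinuity of rank the generic complex form has rank at least that of the specialization. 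Consequently a single specialization whose ranks realize $\bm{a}$ certifies that the generic series equals $\bm{a}$.

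The main obstacle is purely the size of the linear algebra for the larger cases, notably $d=3$, $n=17$, where $E_8$ has dimension $\binom{17}{8}=24310$ and the relevant matrices are correspondingly large; this is exactly what confines the statement to the indicated ranges. Two features keep the computation feasible: the duality of Section \ref{sec:duality}, which lets us stop at degree $\frac{n+d}{2}$, and exact arithmetic over a finite field, which both accelerates the rank computations and, as noted above, suffices for a rigorous conclusion over $\mathbb{C}$.
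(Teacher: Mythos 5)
Your proposal is correct and follows essentially the same route as the paper: the paper likewise handles $d=1$ by an elementary argument (via $\ann(\ell)=(\ell)$, equivalent to your computation $E/(x_1)\cong\bigwedge(x_2,\dots,x_n)$) and settles the remaining finite list of cases by exhibiting witness forms in Macaulay2, relying on Theorem \ref{thm:A} for the reduction to a single witness. The details you add --- the lexicographic-versus-coefficient-wise comparison, checking ranks only up to degree $\frac{n+d}{2}$ via the duality of Section \ref{sec:duality}, and certifying over $\mathbb{F}_p$ by semicontinuity of rank --- are all sound and merely make explicit what the paper's two-sentence proof leaves implicit.
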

\begin{proof}
The case $d=1$ follows from the fact that $\ann(\ell) = (\ell)$, for every linear form $\ell$. The other cases are proved by calculations in Macaulay2. 
\end{proof}

\section{The conjectures by Moreno-Soc\'ias and Snellman} \label{sec:conj}
We now turn to the two conjectures by Moreno-Soc\'ias and Snellman. 

Recall that 
\[
 \HF(\ann(f),i) \ge \max\left(\HF((f),i), \  \binom{n}{i}-\binom{n}{i+d}\right),
\]
when $f$ is a form of odd degree $d$. This follows from the two observations, that the kernel of the map $\cdot f : E_i \to E_{i+d}$ contains $(f)_i$, and has dimension at least $\dim E_i - \dim E_{i+d}$. The first conjecture, \cite[Conjecture 6.1]{mo-sn}, states that, when $d$ is odd and $d \geq 5$ 
\[
 \HF(\ann(f),i)=c+\max\left(\HF((f),i), \  \binom{n}{i}-\binom{n}{i+d}\right),
\]
where
\begin{equation} \label{eq:c}
 c= \begin{cases}
     1 & \text{if there are integers} \ s\ge 0, \ \text{and} \  v>0, \ \text{such that} \\ 
     & \quad i=\frac{v(v+1)}{2}, \ \ n-d=\frac{1}{2}v^2+\frac{5}{2}v-1, \ \ d=5+2vs, \\[1ex]
     0 & \text{otherwise}.
    \end{cases}
\end{equation}

The conjecture was based upon computer experiments, but false, however.  The first counterexample is for $n = 21$ and 
$d = 11$. In this case, the conjecture gives $\ann(f)_i=(f)_i$ for $i \le 5$, which is to say that $\ann(f)_i=\{0\}$ for $i \le 5$.  The map $ \cdot f : E_5 \to E_{16}$
has even rank, by Proposition \ref{prop:dual_rank}. Since $\binom{21}{5}$ is odd, the map has a non-trivial kernel. 

Worth noting is that applying Proposition \ref{prop:dual_rank} gives
\begin{align*}
  \HF(\ann(f),i)= & \dim (\ker ( \cdot f: E_i \to E_{i+d})) \\
  =& \dim E_i - \rank  ( \cdot f: E_i \to E_{i+d}) \\
  =& \binom{n}{i} - \rank  ( \cdot f: E_{n-i-d} \to E_{n-i}) \\
  =& \binom{n}{i} -\left(\binom{n}{n-i-d} - \dim(\ker(  \cdot f: E_{n-i-d} \to E_{n-i})) \right)\\
  =& \binom{n}{i} -\binom{n}{i+d} + \HF(\ann(f),n-i-d),
\end{align*}
and
\[
 \HF(\ann(f),\tfrac{n-d}{2}) \equiv \binom{n}{(n-d)/2} \mod 2,
\]
when $(n-d)/2$ is an odd integer. It follows that
\[
 \HF(\ann(f),i) \ge \begin{cases}
                     \HF((f),i)+c & \text{if} \ i \le \frac{n-d}{2} \\
                     \binom{n}{i}-\binom{n}{i+d} + \HF((f),n-d-i) & \text{if} \ i > \frac{n-d}{2},
                    \end{cases}
\]
where
\[
 c= \begin{cases}
     1 & \text{if} \ \frac{n-d}{2} \ \text{is an odd integer}, i=\frac{n-d}{2}, \ \text{and} \ \HF((f),i) \not \equiv \binom{n}{i} \mod 2, \\
     0 & \text{otherwise.}
    \end{cases}
\]

The second conjecture, \cite[Conjecture 6.2]{mo-sn},  is for the case $d = 3$, and it states that the minimal Hilbert series of $E/(f)$ is given by
$$p_n(t) = \frac{t^3 L_n(t) + (1+t)^n}{1+t^3},$$
$$L_n(t) = \begin{cases} 
(3t)^{2\ell-1} (1+t)^2 & \text{ if} \ n=4\ell \\
c_1(n) t^{2\ell-1} (1+t)(1+(3^{c_2(n)} -1)t + t^2)  & \text{ if} \ n=4\ell+1 \\
(3t)^{2\ell} (1+t)^2 & \text{ if} \ n=4\ell+2 \\
(3t)^{2\ell+1} (1+t) & \text{ if} \ n=4\ell +3
\end{cases}
$$
where $c_1(n)$ and $c_2(n)$ are some positive integers.

While we refuted the first conjecture, our results support the second one.

\begin{proposition}\label{prop:MSSconj}
Conjecture 6.2 in \cite{mo-sn} is true for $(n,d) = (9,3)$ with $c_1(9) = c_2(9) = 3$, and 
$p_n(t)$ coincides with the lower bound given in Corollary \ref{cor:mainthm_ratseries} with $c_1(n) = 1, c_2(n) = \lfloor n/2 \rfloor$. It follows that the conjecture is true for $n \le 17$.
\end{proposition}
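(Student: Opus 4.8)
The statement splits into two independent verifications, and I would treat them separately. The first is the concrete claim for $(n,d)=(9,3)$: that the Moreno-Socías–Snellman formula $p_9(t)$, when evaluated with $c_1(9)=c_2(9)=3$, reproduces the Hilbert series already computed in Proposition \ref{prop:n9d3}. Since $9=4\ell+1$ with $\ell=2$, the relevant branch of $L_n(t)$ is the second one, giving $L_9(t)=3\,t^{3}(1+t)\bigl(1+(3^{3}-1)t+t^{2}\bigr)=3t^3(1+t)(1+26t+t^2)$. The plan is simply to substitute this into $p_9(t)=\bigl(t^3 L_9(t)+(1+t)^9\bigr)/(1+t^3)$, carry out the polynomial division by $1+t^3$, and check coefficient-by-coefficient that the result matches
\[
1 + 9t + \tbinom{9}{2}t^2 + \bigl(\tbinom{9}{3}-1\bigr)t^3 + \bigl(\tbinom{9}{4}-9\bigr)t^4 + \bigl(\tbinom{9}{5}-\tbinom{9}{2}\bigr)t^5 + 4t^6.
\]
This is a finite, mechanical check; no obstacle is expected here beyond bookkeeping.

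The second, and genuinely substantive, part is to show that with the choices $c_1(n)=1$ and $c_2(n)=\lfloor n/2\rfloor$ the rational function $p_n(t)$ coincides (as a truncated series) with the lower bound $B(t)+C(t)$ from Corollary \ref{cor:mainthm_ratseries}. The natural strategy is to compare the two rational expressions directly: both $p_n(t)$ and $B(t)$ are of the form $\bigl(t^3\,(\text{polynomial}) + (1+t)^n\bigr)/(1+t^3)$, so it suffices to show that the numerator correction $t^3 L_n(t)$ agrees with $t^{\lfloor(n+3)/2+1\rfloor}p(t)$ from Corollary \ref{cor:mainthm_ratseries} (possibly plus the contribution of $C(t)$). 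For $d=3$ the polynomial $p(t)$ has an explicit closed form, which the paper has promised in Lemma \ref{lemma:series_multisect_poly}; I would invoke that expression and then split into the four residue classes of $n$ modulo $4$. In each class, $L_n(t)$ is given by a single clean monomial-times-binomial factor, and the task reduces to verifying a polynomial identity between $L_n(t)$ (with the prescribed $c_1,c_2$) and the explicit form of the numerator correction. The odd-$n$ cases ($n\equiv 1,3 \bmod 4$) will additionally require tracking the parity correction $C(t)$, i.e. checking that the ``$+t^{(n+d)/2}$'' term is present in $B(t)+C(t)$ exactly when the corresponding term appears in $p_n(t)$; here the condition ``coefficient of $t^{(n-d)/2}$ in $B(t)$ is odd'' must be matched against the shape of $L_n(t)$.

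The main obstacle I anticipate is the case $n\equiv 1\bmod 4$, where $L_n(t)=c_1(n)t^{2\ell-1}(1+t)\bigl(1+(3^{c_2(n)}-1)t+t^2\bigr)$ carries the free parameters $c_1,c_2$, and one must verify that the specialization $c_1=1$, $c_2=\lfloor n/2\rfloor$ is forced by matching against the explicit $p(t)$ of Lemma \ref{lemma:series_multisect_poly}. Unlike the other three branches, which are bare powers of $3t$, this branch encodes nontrivial coefficient data, so the identity to be checked is less transparent and will lean most heavily on the closed form from the lemma. Once that polynomial identity is established in all four residue classes, the final clause — validity for $n\le 17$ — follows immediately: by Proposition \ref{prop:smalld} the lower bound equals the minimal Hilbert series for $d=3$, $n\le 17$, $n\neq 9$, and the exceptional $(9,3)$ case is exactly the first part handled above, so the conjectured series $p_n(t)$ equals the minimal series throughout this range.
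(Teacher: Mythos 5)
Your outline follows the paper's proof closely: the $(9,3)$ case is settled by direct expansion of $p_9(t)$ with $c_1(9)=c_2(9)=3$ against Proposition \ref{prop:n9d3}, and the general comparison combines Corollary \ref{cor:mainthm_ratseries} with the closed form of Lemma \ref{lemma:series_multisect_poly}, split by residue class (the paper treats both even classes at once as $n=2m$, then $n=4\ell+3$, then $n=4\ell+1$). The deduction of the $n\le 17$ clause from Proposition \ref{prop:smalld} plus the $(9,3)$ case is also exactly the paper's.

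There is, however, one genuine missing ingredient, precisely in the branch $n\equiv 1 \bmod 4$ that you single out as the hard one. Your plan to settle the parity condition by ``matching against the shape of $L_n(t)$'' while ``leaning on the closed form from the lemma'' cannot work as stated: Lemma \ref{lemma:series_multisect_poly} only determines the numerator correction $p(t)$, whose support lies in degrees $\frac{n+3}{2}<i\le\frac{n+9}{2}$, whereas the condition governing $C(t)$ concerns the low-degree coefficient $b_{(n-3)/2}$ of $B(t)$ itself, namely the alternating sum $\sum_{k\ge 0}(-1)^k\binom{4\ell+1}{2\ell-1-3k}$ for $n=4\ell+1$. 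Nor can its parity be read off from $L_n(t)$ --- the logic runs the other way: one must first \emph{prove} this sum is odd, which forces $C(t)=t^{2\ell+2}$, and only then does $B(t)+C(t)$ acquire the numerator factor $(1+t)\bigl(1+(3^{2\ell}-1)t+t^2\bigr)$ matching the $n=4\ell+1$ branch with $c_1=1$, $c_2=2\ell=\lfloor n/2\rfloor$; if the sum were even, $C(t)$ would vanish and the bound would \emph{not} coincide with $p_n(t)$. The paper supplies this parity fact as a separate result, Lemma \ref{lemma:series_multisect_odd}, proved by a second trisection computation showing $3A=2^{4\ell}-1$ for the corresponding unsigned sum $A$. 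Your proposal needs this lemma (or an equivalent parity argument, say via Lucas' theorem) to be complete; with it added, the rest of your verification goes through as planned.
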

\begin{proof}
In the case $n=9$, put $c_1(9) = c_2(9) = 3$, and expand the series $p_n(t)$. This yields exactly the same series as in Proposition \ref{prop:n9d3}. 

When $n$ is even, $n-3$ is not divisible by 2, so $C(t)=0$ in Corollary \ref{cor:mainthm_ratseries}. Combining Corollary \ref{cor:mainthm_ratseries} with Lemma \ref{lemma:series_multisect_poly}, gives the lower bound 
\[ \frac{3^{m-1}t^{m+2}(1+t)^2+(1+t)^n}{1+t^3}\]
for the Hilbert series, where $n=2m$. This agrees with $p_n(t)$ when $n=4\ell$, in which case $m=2\ell$, and when $n=4\ell+2$, in which case $m=2\ell+1$. 

Assume now $n=4\ell+3$. In this case $(n-3)/2=2\ell$ is even, so $C(t)=0$ in Corollary \ref{cor:mainthm_ratseries}.  Combining Corollary \ref{cor:mainthm_ratseries} with Lemma \ref{lemma:series_multisect_poly}, gives the lower bound 
\[ \frac{3^{2\ell+1}t^{2\ell+4}(1+t)+(1+t)^n}{1+t^3},\]
which agrees with $p_n(t)$.

Last, assume $n=4\ell+1$. We get
\[B(t)=\frac{3^{2\ell}t^{2\ell+3}(1+t)+(1+t)^n}{1+t^3}\]
in Corollary \ref{cor:mainthm_ratseries}, when we use the result of Lemma \ref{lemma:series_multisect_poly}. In this case $(n-3)/2=2\ell-1$ is odd. Also $b_{2\ell -1}$ is odd, by Lemma \ref{lemma:series_multisect_odd}. Hence $C(t) = t^{2 \ell +2}$, in Corollary \ref{cor:mainthm_ratseries}. We get the lower bound
\begin{align*}
 B(t)+C(t) =& \frac{3^{2\ell}t^{2\ell+3}(1+t)+(1+t)^n}{1+t^3} + t^{2 \ell +2} \\
 =& \frac{t^{2 \ell +2}(1+t)(1+(3^{2 \ell} -1)t+t^2) +(1+t)^n}{1+t^3} 
\end{align*}
This agrees with $p_n(t)$, when $c_1(n)=1$ and $c_2(n)=2 \ell$. By Proposition \ref{prop:smalld}, the conjecture is true when $n \le 17$.
\end{proof}

\section{Discussion} \label{sec:disc}
We will now discuss questions related to the lower bound given in Corollary \ref{cor:mainthm_HF} and Corollary \ref{cor:mainthm_ratseries}.


We saw in Section \ref{sec:sharp_bounds} that the bound for the Hilbert series, given in Corollary \ref{cor:mainthm_ratseries}, is equal to the minimal Hilbert series in some special cases. One exception is the case $n=9$ and $d=3$, where we get the series from Corollary \ref{cor:mainthm_ratseries}, but with $C(t)=3t^6$ instead of $t^6$. This is the only case that we have detected, where the series does not agree with that in Corollary \ref{cor:mainthm_ratseries}. 

\begin{question} \label{question:main}
 Is the bound in Corollary \ref{cor:mainthm_ratseries} equal to the minimal Hilbert series, except when $n=9$ and $d=3$? Equivalently, when $(n,d) \ne (9,3)$, do we have
\[
 \dim(\ann(f)_i)=\dim((f)_i)+c_i, \ \mbox{for} \ i \le \frac{n-d}{2}, \mbox{where}
\]
\[
 c_i=\begin{cases}
    1 & \text{if} \ i=\frac{n-d}{2}, i \ \text{is odd, and} \ \dim((f)_i) \not \equiv \binom{n}{i} \mod 2 \\
    0 & \text{otherwise?} 
   \end{cases}
\]
\end{question}

The property $\ann(f)_i=(f)_i$, for $i < \frac{n-d}{2}$, implies that $f$ can not have a factor of odd degree less that $\frac{n-d}{2}$. Indeed, if $f=gh$ and $g$ is of odd degree, then $gf=g^2h=0$, and $g \in \ann(f)$. More generally, if $f \in (g_1, \ldots, g_s)$, where $g_1, \ldots, g_s$ are all forms of odd degree, then for any element $g\in (g_1\cdots g_s)$, we have $gf=0$. This leads to the following two questions. 

\begin{question}
 Let $f$ and $g$ be forms in $E$, such that $gf=0$, and $\deg(g) < (n-\deg(f))/2$. Does it follow that $f \in (g_1, \ldots, g_s)$, for some forms $g_1, \ldots, g_s$ of odd degree, such that $g \in (g_1 \cdots g_s)$? 
\end{question}
\begin{question}
 Are there forms $f$ in $E$, such that $f \notin (g_1, \ldots, g_s)$, for any collection of forms $g_1, \ldots, g_s$ of odd degree less than $\deg(f)$, such that $\deg(g_1 \cdots g_s) < (n-\deg(f))/2$?
\end{question}

The polynomial $p(t)$ in Corollary \ref{cor:mainthm_ratseries} can be simplified when $d=3$, see Lemma \ref{lemma:series_multisect_poly} and Proposition \ref{prop:MSSconj}. One might ask is such a simplification is possible for larger $d$. 
\begin{question}
 Is there a nice expression for the numerator of $B(t)$ in Corollary \ref{cor:mainthm_ratseries}?
\end{question}



Let $f$ be a generic form of degree $d$ in $E$, and let $\sum_{i=0}^n a_it^i$ be the Hilbert series of $E/(f)$. By Theorem \ref{thm:A}, if $\sum_{i=0}^n h_it^i$ is the Hilbert series of $E/(f')$, for any form $f'$ of degree $d$, then $h_i \ge a_i$, for each $i$. This gives rise to the following question.

\begin{question}\label{que:lexineq}
 Can we replace the lexicographic inequality by ''$h_i \ge a_i$ for all $i$'', in Corollary \ref{cor:mainthm_HF}?
\end{question}
It is natural to study this question in terms of the Hilbert function of the quotient module $\ann(f)/(f)$. Indeed,  
suppose $r$ is the first degree where $\ann(f)_r\ne (f)_r$, and suppose $r+d <\frac{n-d}{2}$. Then 
\[
 h_{r+d} =a_{r+d}+ \HF(\ann(f)/(f),r).
\]
If we continue as in the proof of Theorem \ref{thm:main} we get that 
\[
 h_{r+d+j} =a_{r+d+j}+ \HF(\ann(f)/(f),{r+j})
\]
for $j=1, \ldots d-1$. For $j=d$ we get
\[
 h_{r+2d} =a_{r+2d}+\HF(\ann(f)/(f),{r+d})-\HF(\ann(f)/(f),{r}).
\]
We see that $h_{r+2d} \ge a_{r+2d}$ if 
\[
  \HF(\ann(f)/(f),{r+d}) \ge  \HF(\ann(f)/(f),{r}).
\]
Hence, to answer Question \ref{que:lexineq}, we need to understand the Hilbert series of $\ann(f)/(f)$. It follows by Proposition \ref{prop:dual_rank} that 
\begin{align*} 
 \HF(\ann(f)/&(f),i) = \dim(\ann(f)_i) - \dim((f)_i)= \\
= & \dim(\ker(\cdot f : E_i \to E_{i+d}))-\rank(\cdot f: E_{i-d} \to E_{i}) \\
 =& \dim(E_i) - \rank (\cdot f : E_i \to E_{i+d}) -\rank(\cdot f: E_{i-d} \to E_{i}) \\
 =& \dim(E_{n-i}) - \rank (\cdot f : E_{n-i-d} \to E_{n-i}) -\rank(\cdot f: E_{n-i} \to E_{n-i+d}) \\
 =& \dim(\ker(\cdot f : E_{n-i} \to E_{n-i+d}))-\rank (\cdot f : E_{n-i-d} \to E_{n-i}) \\
 =& \HF(\ann(f)/(f),{n-i}),
\end{align*}
which means that the Hilbert function is symmetric about $i=\frac{n}{2}$. However, it is not clear whether it is weakly increasing up to $i=\frac{n}{2}$, or not.

\begin{remark}
In the case when $f$ is of even degree, the question of factorization is not as much of a problem, since it does not necessarily imply factors of odd degree. As mentioned in the introduction, the form of degree $2d$ used in \cite{mo-sn} to obtain the minimal Hilbert series of $E$ modulo a form of degree $2d$, is 
\[
 h_{2d} = \sum_{1 \le i_1 < \ldots <i_{2d} \le n} \!\!\!\!\!x_{i_1} \cdots x_{i_{2d}}.
\]
It is an easy exercise to show that $h_{2d}$ can, in fact, be factorized as 
\[
 h_{2d}=\frac{1}{d!}\Big( \sum_{1 \le i<j\le n}x_ix_j \Big)^d.
\]
It follows, for a generic form $g$ of degree 2, that $\cdot g^d : E_i \to E_{i+2d}$ has maximal rank for all $i$.  In terms of  the Lefschetz properties, this means that $E_n$ has the strong $2$-degree Lefschetz property, and that $h_2$ is a strong Lefschetz element of degree two.  See also \cite{ie-wa}, where a similar remark was made for $h_2$ being weak $2$-degree Lefschetz. 

The corresponding form 
\[
 h_{2d+1}= \sum_{1 \le i_1 < \ldots <i_{2d+1} \le n} \!\!\!\!\! x_{i_1} \cdots x_{i_{2d+1}}.
\]
is \emph{not} a good general candidate in the odd case, since it is divisible by $(x_1 + \dots + x_n)$. 
\end{remark}

{ \bf Acknowledgements.}
We would like to thank all the participants of the Stockholm problem solving seminar for many inspiring discussions during the progress of this work. We are also thankful to Jan Snellman for comments on a draft of this manuscript.

\appendix
\section{Generic forms}\label{app:gen}
The following theorem is an analog to \cite[Theorem 1]{fr-lo}, which can be applied to the exterior algebra. The theorem is folklore, but since we are not aware of a proof in the literature, we give a full proof. 

Let $R$ be a graded algebra over $\mathbb{C}$. A sequence of forms $(f_1, \ldots, f_r)$ in $R$ can be considered as a point $P$ in $\mathbb{A}_\mathbb{C}^N$, where $\deg f_i = d_i$, and $N = \sum_{i=1}^r \dim R_{d_i}$. We say that the algebra $R/I$ is induced by the point $P$ if $I$ is the left, right, or two-sided ideal generated by $f_1, \ldots, f_r$.   

\begin{theorem} \label{thm:A}
Let $R=R_0 \oplus R_1 \oplus \dots \oplus R_s$ be a finite dimensional graded algebra over $\mathbb{C}$. Let $r, d_1,\ldots,d_r$ be positive integers, and $A=\mathbb{A}_\mathbb{C}^N$, where $N = \sum_{i=1}^r \dim R_{d_i}.$ 

Then the Hilbert series of algebras induced by points in $A$ take only finitely many values. Furthermore, there is a Zariski-open dense subset of $A$ on which the Hilbert series for the corresponding algebras is constant and minimal.
\end{theorem}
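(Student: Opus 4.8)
The plan is to reduce everything to the behavior of the rank of a matrix whose entries depend polynomially on the coordinates of $A$. Write a point $P \in A = \mathbb{A}_\mathbb{C}^N$ as a tuple of forms $(f_1, \ldots, f_r)$ with $\deg f_i = d_i$, expanding each $f_i = \sum_j c_{ij} e_{ij}$ in a fixed basis $\{e_{ij}\}$ of $R_{d_i}$; the scalars $c_{ij}$ are precisely the affine coordinates on $A$. The induced ideal $I = I(P)$ is generated by $f_1, \ldots, f_r$, and the argument will be identical whether this is read as a left, right, or two-sided ideal.

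First I would fix a degree $k$ and describe $I_k$ as the column space of an explicit matrix. In the two-sided case $I_k$ is spanned by all homogeneous products $a\, f_i\, b$ with $\deg a + d_i + \deg b = k$, so it suffices to let $a$ and $b$ range over fixed homogeneous bases of $R$. Each product $a\, e_{ij}\, b$ is a fixed vector of $R_k$, and $a\, f_i\, b = \sum_j c_{ij}\,(a\, e_{ij}\, b)$. Collecting these as columns, indexed by the admissible triples $(a,i,b)$, against a fixed basis of $R_k$, produces a matrix $M_k(P)$ whose entries are linear forms in the $c_{ij}$, with $I_k$ equal to the column space of $M_k(P)$, so that $\dim I_k = \rank M_k(P)$. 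The left and right ideals are the special cases $b = 1$ and $a = 1$.

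Next I would invoke semicontinuity of rank. Since $R$ is finite dimensional, $\rank M_k(P)$ is bounded by $\dim R_k$ and hence takes only finitely many values; it attains its maximum $\rho_k$ on the set $U_k$ where some $\rho_k \times \rho_k$ minor is nonzero. Each such minor is a polynomial in the $c_{ij}$, so $U_k$ is Zariski-open, and it is nonempty by the definition of $\rho_k$; as $A$ is irreducible, a nonempty open set is dense. On $U_k$ we have $\dim(R/I)_k = \dim R_k - \rho_k$ at its minimal value, while for every $P \in A$ we have $\dim(R/I)_k(P) = \dim R_k - \rank M_k(P) \ge \dim R_k - \rho_k$. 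Finally I would intersect over degrees: because $R = R_0 \oplus \cdots \oplus R_s$, only the degrees $0 \le k \le s$ occur, the Hilbert series of $R/I$ is the tuple $(\dim(R/I)_0, \ldots, \dim(R/I)_s)$ with each entry in a finite set, which already gives the first assertion, and setting $U = \bigcap_{k=0}^s U_k$ yields a finite intersection of nonempty open subsets of the irreducible space $A$, hence a nonempty open dense set on which every $\dim(R/I)_k$ is simultaneously minimal.

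I do not expect a serious obstacle here: the two points needing care are verifying that the product map is genuinely linear in the coordinates, so that the entries of $M_k(P)$ are honest polynomials and rank semicontinuity applies, and recognizing that ``minimal'' must be read coefficient-wise, which is exactly what the finite intersection $U$ secures by optimizing all degrees at once.
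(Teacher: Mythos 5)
Your proof is correct, and it is organized differently from the paper's in a way worth noting. Both arguments ultimately rest on the same mechanism: the dimension of $I_k$ is the rank of a matrix whose entries are polynomials in the coordinates $c_{ij}$, rank is lower semicontinuous because dropping rank means vanishing of all minors of a given size, and intersecting the finitely many maximal-rank loci over $k=0,\ldots,s$ gives a nonempty (hence dense, by irreducibility of $\mathbb{A}^N_\mathbb{C}$) open set where every coefficient of the Hilbert series is simultaneously minimal. The difference is in how the matrix is produced. You form, for each degree $k$, a single matrix $M_k(P)$ whose columns are the products $a\,f_i\,b$ (with $a,b$ running over fixed homogeneous bases and $b=1$ or $a=1$ in the one-sided cases), so that $\dim I_k = \rank M_k(P)$ and the entries are visibly linear in the $c_{ij}$; the three cases (left, right, two-sided) are handled uniformly. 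The paper instead expresses the condition $a_m > b_m$ as a restriction on the ranks of a telescoping chain of multiplication maps, e.g.\ $\cdot f_j$ acting on $[R/(f_1R+\cdots+f_{j-1}R)]$, and in the two-sided case on quotients by $J_i + e_{i1}f_iR + \cdots$, asserting that the relevant minors are again polynomials in the coefficients of the $f_i$. Your single-matrix formulation buys a genuine simplification here: the paper's maps go between quotient spaces whose dimensions themselves vary with the point $P$, so making the phrase ``the minors are polynomials in the coefficients'' precise requires an implicit lifting of those maps to fixed ambient spaces, whereas your $M_k(P)$ lives between fixed spaces from the start and the semicontinuity argument applies with no further comment. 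What the paper's chain formulation buys in exchange is a degree-by-degree, generator-by-generator bookkeeping that matches how one actually computes such Hilbert functions (and how the rank conditions are used elsewhere in the paper), at the cost of the extra delicacy just described. One small point of care in your version, which you handle correctly but should keep explicit: spanning $I_k$ by the homogeneous products $a\,f_i\,b$ uses that $R$ is unital with $1 \in R_0$, so that $f_i$ itself and its one-sided multiples are captured by letting $a$ or $b$ range over a basis of $R_0$.
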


\begin{proof}
Let $(f_1, \ldots, f_r)$ be a sequence of forms in $R$ corresponding to a point in $A$, and let $I$ be the left, right, or two-sided ideal generated by this sequence. The Hilbert series of $R/I$ is a polynomial $a_0+ a_1t+ \dots + a_st^s$, with $a_i \le \dim R_i$. It follows that there are only finitely many possibilities for the Hilbert series of $R/I$.

Let us first consider the case of right ideals. Let $\{p^{(j)}(t)\}_j$ be the set of polynomials $p^{(j)} = a^{(j)}_0 + a^{(j)}_1 t + \cdots +a^{(j)}_st^s$, that are Hilbert series corresponding to some point in $A$. Let $b_i = \min_j\{a^{(j)}_i\}$. Let $F_m \subset A$ be a set of points yielding Hilbert series $\sum_{i=0}^s a_i t^i$ with $a_m > b_m$. The condition $a_m >b_m$ gives a restriction on the ranks of the maps
\begin{align*}
 &\cdot f_1: R_{d_1-m} \to R_m, \\
 &\cdot f_2: [R/f_1R]_{d_2-m} \to [R/f_1R]_{m}, \\
 & \ \ \ \ \  \vdots \\
 &\cdot f_r: [R/(f_1R + \dots + f_{r-1}R)]_{d_r-m} \to  [R/(f_1R+ \dots +f_{r-1}R)]_{m},
\end{align*}
induced by multiplication by $f_i$ from the left. That a linear map has rank strictly less than a number $k$ is equivalent to all $k \times k$-minors of the matrix being zero. In this case, the minors can be expressed as polynomials in coefficients of the $f_i$'s. It follows that $F_i$ is a finite union of closed sets, which is again closed. Then, so is $\bigcup_{i=0}^s F_i$, and hence the complement of this set is a non empty open set. Thus $\sum_i b_i t^i$ actually occurs as the Hilbert series for some point in $A$, and the set of points with this series is open. 

The case when $I$ is the left ideal generated by $f_1, \ldots, f_r$ is proved analogously. Let now $I$ be the two-sided ideal generated by $f_1, \ldots, f_r$. For each $i=1, \ldots, r$, let $e_{i1}, e_{i2}, \ldots, e_{ik}$ be the monomials of degree at most $m-d_i$, and let $c_{ij}=\deg {e_{ij}}$. Let $J_1=\{0\}$, and $J_i=Rf_1R + \dots + Rf_{i-1}R$, and consider the linear maps
\begin{align*}
  \cdot e_{i1}f_i: & [R/J_i]_{m-d_i-c_{i1}} \to [R/J_i]_m, \\
  \cdot e_{i2}f_i: & [R/(J_i+e_{i1}f_iR)]_{m-d_i-c_{i2}} \to [R/(J_i+e_{i1}f_iR)]_m ,\\
 & \ \  \vdots \\
  \cdot e_{ik}f_i: & [R/(J_i+e_{i1}f_iR+ \ldots + e_{i,k-1}f_iR)]_{m-d_i-c_{is}}  \to \\
 & \qquad \ \ \ \ \ \ [R/(J_i+e_{i1}f_iR+ \ldots + e_{i,k-1}f_iR)]_m, 
\end{align*}
for $i=1, \ldots, r$. The condition $a_m>b_m$ gives a restriction on the rank of these maps. The result follows in the same way as for right ideals, since the minors corresponding to the above maps also can be expressed as polynomials in the coefficients of the $f_i$'s. 
\end{proof}

\begin{remark}
In Theorem \ref{thm:A}, the field $\mathbb{C}$ can be replaced by any infinite field.
\end{remark}

\section{An application of the Series Multisection Formula}
The purpose of this appendix is to prove Lemma \ref{lemma:series_multisect_poly} and Lemma \ref{lemma:series_multisect_odd}, which are used in the proof of Proposition \ref{prop:MSSconj}. Our main tool is the \emph{series multisection formula}. Let $F(x)=\sum_{i \ge 0} a_ix^i \in \mathbb{Z}[[x]]$. Then the $r$:th $d$-section of the series $F(x)$ is given by 
\begin{equation}\label{series_multisect_formula}
 \sum_{i \ge 0} a_{r+id}x^{r+id} = \frac{1}{d} \sum_{j=1}^d \omega^{-rj}F(\omega^jx), 
\end{equation}
where $\omega$ is a primitive $d$:th root of unity. For a proof of (\ref{series_multisect_formula}), see e.\! g.\! \cite{ri}. 

\begin{lemma}\label{lemma:series_multisect_poly}
 For a fixed positive integer $n$, 
 \begin{align*}
  &\sum_{\frac{n+3}{2}<i\le \frac{n+9}{2}} \! \Big( \! \sum_{\substack{k \in \mathbb{Z} \\ 0 \le i+3k \le n}} \!\! (-1)^{k+1}\binom{n}{i+3k} \Big) t^i =\\
  & \quad = \begin{cases}
                                                                                                                                                           3^{m-1}t^{m+2}(1+t)^2 & \text{if} \ n=2m, \ \text{for an integer} \ m, \\
                                                                                                                                                           3^mt^{m+3}(1+t) & \text{if} \ n=2m+1,\ \text{for an integer} \ m.                                                                                                                                         \end{cases}
 \end{align*}
\end{lemma}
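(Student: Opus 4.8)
The plan is to apply the series multisection formula \eqref{series_multisect_formula} directly to the binomial series $F(t)=(1+t)^n$, whose coefficients are $\binom{n}{j}$. The inner sum $\sum_{k}(-1)^{k+1}\binom{n}{i+3k}$ is, up to the overall sign $(-1)^{i+1}$ that I will track carefully, precisely an alternating $3$-section of the binomial coefficients. I would take $d=3$ and $\omega=e^{2\pi i/3}$, a primitive cube root of unity, and write the relevant alternating sum as a combination of $F$ evaluated at $t$, $\omega t$, and $\omega^2 t$. The alternation in $(-1)^k$ can be absorbed by noting that $(-1)^{i+3k}=(-1)^{i+k}$, so replacing $t$ by $-t$ (equivalently evaluating $F$ at $-t$, $-\omega t$, $-\omega^2 t$) converts the plain $3$-section into the signed one I need.

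Concretely, first I would fix a residue $r$ with $\frac{n+3}{2}<r\le\frac{n+9}{2}$ and express the coefficient $S_r=\sum_{0\le r+3k\le n}(-1)^{k+1}\binom{n}{r+3k}$ via \eqref{series_multisect_formula} applied to $G(t)=(1-t)^n=\sum_j(-1)^j\binom{n}{j}t^j$, whose $r$-th $3$-section isolates exactly the terms $j\equiv r\pmod 3$ and supplies the factor $(-1)^j=(-1)^{r+3k}=(-1)^{r}(-1)^{k}$. Evaluating $(1-\omega^j t)^n$ at $t=1$ gives the closed forms $(1-\omega^j)^n$; since $1-\omega$ and $1-\omega^2$ are the nontrivial cube-root-of-unity factors, their $n$-th powers will, after collecting, produce the powers of $3$ appearing on the right-hand side (because $|1-\omega|^2=3$). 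The sign $\omega^{-rj}$ then selects how these contributions combine for each residue class of $r$ mod $3$.

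Next I would compute the four coefficients $S_r$ for $r=\lfloor\frac{n+3}{2}\rfloor+1,\dots,\lfloor\frac{n+9}{2}\rfloor$ explicitly, splitting into the two parities of $n$. For $n=2m$ the claimed answer $3^{m-1}t^{m+2}(1+t)^2=3^{m-1}(t^{m+2}+2t^{m+3}+t^{m+4})$ predicts a specific pattern $1,2,1$ (scaled by $3^{m-1}$) across three consecutive degrees with the remaining degrees vanishing; for $n=2m+1$ the target $3^m t^{m+3}(1+t)$ predicts the pattern $1,1$ in two consecutive degrees. I would match the multisection output against these by evaluating the relevant trigonometric expressions $\mathrm{Re}$ and $\mathrm{Im}$ of $(1-\omega^j)^n$, using $1-\omega=\sqrt{3}\,e^{-i\pi/6}$ and $1-\omega^2=\sqrt{3}\,e^{i\pi/6}$, so that $(1-\omega)^n+(1-\omega^2)^n=2\cdot 3^{n/2}\cos(n\pi/6)$ and the difference gives the corresponding sine; these cosine/sine values at multiples of $\pi/6$ are exactly what produce the coefficients $0,1,2$ and the powers of $3$ after dividing by $3$ from the multisection prefactor.

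The main obstacle will be the careful bookkeeping of signs and of which degrees $i$ actually fall inside the window $\frac{n+3}{2}<i\le\frac{n+9}{2}$, since the floor in $\lfloor\frac{n+3}{2}\rfloor$ behaves differently for the two parities of $n$ and shifts the residue $r\bmod 3$ that each degree occupies; getting $\cos(n\pi/6)$ and $\cos((n\pm1)\pi/6)$ aligned with the correct degree is where an off-by-one error is easiest to make. I expect the trigonometric evaluations themselves to be routine once the alignment is pinned down, so the bulk of the work is organizing the computation into the two parity cases and verifying that the nonzero coefficients land exactly in the three (respectively two) consecutive degrees asserted, with all other coefficients in the window vanishing.
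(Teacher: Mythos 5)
Your proposal is correct and takes essentially the same route as the paper: both apply the series multisection formula with a primitive cube root of unity $\omega$ to $(1-x)^n$ evaluated at $x=1$, absorbing the alternating sign $(-1)^k$ into the $(-1)^j$ of the binomial expansion, and then compute the in-window coefficients case by case according to the parity of $n$. The only (cosmetic) difference is that you evaluate $(1-\omega)^n$ and $(1-\omega^2)^n$ in polar form via $1-\omega=\sqrt{3}\,e^{-i\pi/6}$ and cosine values at multiples of $\pi/6$, where the paper instead uses the algebraic identity $(1-\omega)^2=-3\omega$ and tracks powers of $\omega$ through six enumerated cases; note only that the window $\frac{n+3}{2}<i\le\frac{n+9}{2}$ contains three integers, not four, as you correctly reflect later when matching the patterns $1,2,1$ and $0,1,1$.
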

\begin{proof}
 Let 
 \[
  F(x)=(1-x)^n = \sum_{j=0}^n(-1)^j\binom{n}{j}x^{n-j}, 
 \]
 and let $\omega$ be a primitive third root of unity. For a fixed integer $i$, let
 \[
  c_i= \sum_{\substack{k \in \mathbb{Z} \\ 0 \le i+3k \le n}} \!\!\!\!\! (-1)^{k+1}\binom{n}{i+3k}.
 \]
Write $i=3q+r$, where $q,r \in \mathbb{Z}$ and $0 \le r <3$. Then 
\begin{align*}
 c_i=\sum_{\substack{k \in \mathbb{Z} \\ 0 \le i+3k \le n}} \!\!\!\!\! &(-1)^{k+1}\binom{n}{i+3k} = -\!\!\!\!\!\!\sum_{\substack{k \in \mathbb{Z} \\ 0 \le i+3k \le n}} \!\!\!\!\! (-1)^{3k}\binom{n}{i+3k} =\\
 &= (-1)^{q+1}\!\!\!\!\!\!\!\sum_{\substack{k \in \mathbb{Z} \\ 0 \le r+3k \le n}} \!\!\!\!\! (-1)^{3k}\binom{n}{r+3k} = (-1)^{q+1+r}\!\!\!\!\!\!\!\sum_{\substack{k \in \mathbb{Z} \\ 0 \le r+3k \le n}} \!\!\!\!\! (-1)^{3k+r}\binom{n}{r+3k}.
\end{align*}
This is the $r$:th trisection of $(-1)^{q+r+1}F(x)$, evaluated at $x=1$. By (\ref{series_multisect_formula}) we get
\begin{align*}
 c_i&=(-1)^{q+r+1}\frac{1}{3}\sum_{j=1}^3 \omega^{-rj}F(\omega^j) = (-1)^{q+r+1}\frac{1}{3}(\omega^{-r}(1-\omega)^n+\omega^{-2r}(1-\omega^2)^n) \\&= (-1)^{q+r+1}\frac{1}{3}(\omega^{2r}(1-\omega)^n+\omega^{r}(1-\omega^2)^n)
\end{align*}
Now, notice that $(1-\omega)^2=1-2\omega+\omega^2=-3\omega$, since $1+\omega+\omega^2=0$. In a similar way we see that $(1-\omega^2)^2=-3\omega^2$. If $n=2m$, we get
\begin{align*}
 c_i&= (-1)^{q+r+1}\frac{1}{3}(\omega^{2r}(1-\omega)^{2m}+\omega^{r}(1-\omega^2)^{2m}) \\
 &=\! (-1)^{q+r+1}\frac{1}{3}(\omega^{2r}(-3\omega)^{m}\!+\omega^{r}(-3\omega^2)^{m}) = (-1)^{q+r+m+1}3^{m-1}(\omega^{2r+m}\!+\omega^{r+2m})
\end{align*}
If $n=2m+1$, we get
\begin{align*}
 c_i&= (-1)^{q+r+1}\frac{1}{3}(\omega^{2r}(1-\omega)^{2m+1}+\omega^{r}(1-\omega^2)^{2m+1}) \\
 &= (-1)^{q+r+1}\frac{1}{3}(\omega^{2r}(-3\omega)^{m}(1-\omega)+\omega^{r}(-3\omega^2)^{m}(1-\omega^2)) \\
 &= (-1)^{q+r+m+1}3^{m-1}(\omega^{2r+m}-\omega^{2r+m+1}+\omega^{r+2m}-\omega^{r+2m+2})
\end{align*}
It remains to compute $c_i$ for $(n+3)/2<i\le (n+9)/2$. For both $n=2m$ and $n=2m+1$, the $i$'s satisfying this inequality are $i=m+2, m+3$ and $m+4$. This gives us six cases to examine.  
\begin{enumerate}[{\bf I.}]
 \item \underline{$n=2m$ and $i=m+2$} \\
 We have $m+2=i=3q+r$, and hence $m=3q+r-2$. Then
 \begin{align*} 
 c_{m+2}=&(-1)^{q+r+m+1}3^{m-1}(\omega^{2r+m}+\omega^{r+2m})\\
 =&(-1)^{4q+2r-1}3^{m-1}(\omega^{3q+3r-2}+\omega^{6q+3r-4}) \\
 =&-3^{m-1}(\omega+\omega^2)=3^{m-1} 
 \end{align*}
\item \underline{$n=2m$ and $i=m+3$} \\
Here $m=3q+r-3$, which gives
\begin{align*} 
 c_{m+3}=&(-1)^{q+r+m+1}3^{m-1}(\omega^{2r+m}+\omega^{r+2m})\\
 =&(-1)^{4q+2r-2}3^{m-1}(\omega^{3q+3r-3}+\omega^{6q+3r-6}) \\
 =&3^{m-1}(1+1)=2\cdot 3^{m-1} .
 \end{align*}
 \item \underline{$n=2m$ and $i=m+4$} \\
Here $m=3q+r-4$, which gives
\begin{align*} 
 c_{m+4}=&(-1)^{q+r+m+1}3^{m-1}(\omega^{2r+m}+\omega^{r+2m})\\
 =&(-1)^{4q+2r-3}3^{m-1}(\omega^{3q+3r-4}+\omega^{6q+3r-8}) \\
 =&-3^{m-1}(\omega^2+\omega)=3^{m-1}. 
 \end{align*}
 \item \underline{$n=2m+1$ and $i=m+2$} \\
Here $m=3q+r-2$, which gives
\begin{align*} 
 c_{m+2}=&(-1)^{q+r+m+1}3^{m-1}(\omega^{2r+m}-\omega^{2r+m+1}+\omega^{r+2m}-\omega^{r+2m+2})\\
 =&(-1)^{4q+2r-1}3^{m-1}(\omega^{3q+3r-2}-\omega^{3q+3r-1}+\omega^{6q+3r-4}-\omega^{6q+3r-2}) \\
 =&-3^{m-1}(\omega-\omega^2+\omega^2-\omega)=0.
 \end{align*}
  \item \underline{$n=2m+1$ and $i=m+3$} \\
Here $m=3q+r-3$, which gives
\begin{align*} 
 c_{m+3}=&(-1)^{q+r+m+1}3^{m-1}(\omega^{2r+m}-\omega^{2r+m+1}+\omega^{r+2m}-\omega^{r+2m+2})\\
 =&(-1)^{4q+2r-2}3^{m-1}(\omega^{3q+3r-3}-\omega^{3q+3r-2}+\omega^{6q+3r-6}-\omega^{6q+3r-4}) \\
 =&3^{m-1}(1-\omega+1-\omega^2)=3^m .
 \end{align*}
  \item \underline{$n=2m+1$ and $i=m+4$} \\
Here $m=3q+r-4$, which gives
\begin{align*} 
 c_{m+4}=&(-1)^{q+r+m+1}3^{m-1}(\omega^{2r+m}-\omega^{2r+m+1}+\omega^{r+2m}-\omega^{r+2m+2})\\
 =&(-1)^{4q+2r-3}3^{m-1}(\omega^{3q+3r-4}-\omega^{3q+3r-3}+\omega^{6q+3r-8}-\omega^{6q+3r-6}) \\
 =&-3^{m-1}(\omega^2-1+\omega-1)=3^m .
 \end{align*}
\end{enumerate}
Finally we see that, when $n=2m$,
\begin{align*}
c_{m+2}t^{m+2}+c_{m+3}t^{m+3}+c_{m+4}t^{m+4}&=3^{m-1}t^{m+2}+2\cdot 3^{m-1}t^{m+3}+3^{m-1}t^{m+4}\\
&=3^{m-1}t^{m+2}(1+t)^2,
\end{align*}
and when $n=2m+1$, 
\begin{align*}
c_{m+2}t^{m+2}+c_{m+3}t^{m+3}+c_{m+4}t^{m+4}&=3^{m}t^{m+3}+3^{m}t^{m+4}=3^{m}t^{m+3}(1+t),
\end{align*}
which we wanted to prove.
\end{proof}

\begin{remark}
As a consequence of Lemma \ref{lemma:series_multisect_poly} we obtain some identities for alternating sums of binomial coefficients, for instance 
$$\binom{12 a}{0} - \binom{12 a}{3} + \binom{12 a}{6} - \binom{12a}{9} + \cdots + \binom{12a}{12a} = 2 \cdot 3^{6a -1}.$$

\end{remark}

\begin{lemma}\label{lemma:series_multisect_odd}
 Let $\ell$ be a positive integer. Then, the number
 \[
  \sum_{\substack{k \ge 0 \\ 3k+1 \le 2 \ell}} (-1)^k \binom{4 \ell +1}{2 \ell -1-3k} 
 \]
is odd.
\end{lemma}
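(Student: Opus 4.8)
The plan is to read the given alternating sum as a single coefficient of an explicit rational generating function, reduce everything modulo $2$ inside $\mathbb{F}_2[[t]]$, and convert the parity question into a problem of counting binary submasks in a prescribed range and residue class. First I would observe that the sum is exactly $[t^{2\ell-1}]\big((1+t)^{4\ell+1}/(1+t^3)\big)$: expanding $1/(1+t^3)=\sum_{k\ge0}(-1)^kt^{3k}$ and $(1+t)^{4\ell+1}=\sum_j\binom{4\ell+1}{j}t^j$ and collecting the coefficient of $t^{2\ell-1}$ reproduces $\sum_{k\ge0}(-1)^k\binom{4\ell+1}{2\ell-1-3k}$ (this is $b_{2\ell-1}$ in the notation of Corollary \ref{cor:mainthm_HF}). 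Working in $\mathbb{F}_2[[t]]$, I would use $(1+t)^{4\ell+1}=(1+t)(1+t^4)^\ell$ together with the factorization $1+t^3=(1+t)(1+t+t^2)$ to cancel the factor $(1+t)$, reducing the series to $(1+t^4)^\ell/(1+t+t^2)$. Then $1/(1+t+t^2)=(1+t)/(1+t^3)=\sum_{k\ge0}(t^{3k}+t^{3k+1})$ and, by Lucas' theorem, $(1+t^4)^\ell\equiv\sum_{m\preceq\ell}t^{4m}$, where $m\preceq\ell$ denotes that $m$ is a binary submask of $\ell$.

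Multiplying these two series and extracting the coefficient of $t^{2\ell-1}$ shows that the parity we want equals, mod $2$, the number of pairs $(m,k)$ with $m\preceq\ell$, $k\ge0$, and $4m+3k\in\{2\ell-2,2\ell-1\}$. For each submask $m$ there is at most one admissible $k$, present precisely when $4m\le 2\ell-1$ (equivalently $m\le B:=\lfloor(\ell-1)/2\rfloor$, a bound I would check is the same for both targets, irrespective of the parity of $\ell$) and $m$ lies in the appropriate residue class mod $3$; the two targets force the two residues $2\ell-1,2\ell-2\pmod3$, i.e. all residues except $2\ell\pmod3$. Hence the sum is congruent mod $2$ to $\#\{m\preceq\ell:\ m\le B,\ m\not\equiv 2\ell\pmod 3\}$.

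The key structural step is that, writing $2^b$ for the leading bit of $\ell$, one has $B<2^b$, while every submask of $\ell$ avoiding the bit $2^b$ is automatically $\le \ell-2^b\le B$; therefore $\{m\preceq\ell:\ m\le B\}$ is exactly the set of submasks of $\ell':=\ell-2^b$, of which there are $P=2^{w-1}$, where $w$ is the number of $1$'s in $\ell$. It then remains to count $Q:=\#\{m\preceq\ell':\ m\equiv 2\ell\pmod3\}$ and show $P-Q$ is odd. I would evaluate $Q$ with a cube-root-of-unity filter applied to $\prod_{i\in\mathrm{bits}(\ell')}(1+x^{2^i})$, using $2^i\equiv(-1)^i\pmod3$; the crucial point is that the residue $2\ell\pmod3$ is never the ``heavy'' class, since that would force $2^{b+1}\equiv0\pmod3$, which is impossible. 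This yields $Q=\tfrac13\big(2^{w-1}-(-1)^{w-1}\big)$ and hence $P-Q=\tfrac13\big(2^{w}-(-1)^{w}\big)$, the $w$-th Jacobsthal number. Finally, from $\tfrac13(2^{w}-(-1)^{w})=2^{w-1}-\tfrac13(2^{w-1}-(-1)^{w-1})$ one gets that consecutive Jacobsthal numbers have the same parity, so all of them are odd; as $\ell\ge1$ forces $w\ge1$, the sum is odd.

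I expect the main obstacle to be the residue-counting step: controlling how the submasks of $\ell'$ distribute among the three residue classes mod $3$, and in particular verifying that the target class $2\ell\pmod3$ is always a minority class. The bookkeeping around the truncation $m\le B$ (checking that the floor is uniform in the parity of $\ell$, and matching it with the submask condition) is the other place where care is needed, though it becomes routine once the leading-bit observation is in place.
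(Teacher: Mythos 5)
Your proof is correct, but it takes a genuinely different route from the paper's, and a substantially heavier one. The paper's argument is short: since only parity matters, the signed sum is odd iff the unsigned sum $A=\sum_{3k+1\le 2\ell}\binom{4\ell+1}{2\ell-1-3k}$ is odd; the symmetry $\binom{4\ell+1}{2\ell-1+3k}=\binom{4\ell+1}{2\ell+2-3k}$ folds the \emph{full} trisection $\sum_{k\in\mathbb{Z}}\binom{4\ell+1}{2\ell-1+3k}$ into exactly $2A$, and the multisection formula evaluates it in closed form (using $1+\omega=-\omega^2$ and $4\ell+r+1\equiv 0\pmod 3$) as $2A=\tfrac13(2^{4\ell+1}-2)$, so $3A=2^{4\ell}-1$ is odd and hence so is $A$ --- no mod-$2$ reduction of the series is needed. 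You instead reduce $(1+t)^{4\ell+1}/(1+t^3)$ in $\mathbb{F}_2[[t]]$, cancel $(1+t)$, apply Lucas to $(1+t^4)^\ell$, and turn the parity into the submask count $\#\{m\preceq\ell:\ m\le B,\ m\not\equiv 2\ell\pmod 3\}$. I verified your three delicate steps and they all hold: the truncation bound is uniform because $2\ell-1$ is odd, hence never divisible by $4$, so $\lfloor(2\ell-1)/4\rfloor=\lfloor(2\ell-2)/4\rfloor=\lfloor(\ell-1)/2\rfloor$; the truncated submasks of $\ell$ are exactly the submasks of $\ell'=\ell-2^b$, since $B<2^b$ and $\ell'\le\lfloor(\ell-1)/2\rfloor$ is equivalent to $\ell\le 2^{b+1}-1$; and with $a$ even-position and $c$ odd-position bits of $\ell'$, the cube-root filter gives $N_r=\tfrac13\bigl(2^{w-1}+(-1)^{w-1}\epsilon_r\bigr)$ with $\epsilon_r=2$ precisely for $r\equiv 2a+c\equiv 2\ell'\pmod 3$, and this majority class differs from the target $2\ell$ because $2\ell-2\ell'=2^{b+1}\not\equiv 0\pmod 3$, exactly as you claimed. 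This yields $P-Q=\tfrac13\bigl(2^w-(-1)^w\bigr)=J_w$, whose oddness is immediate even without your recurrence, since $3J_w=2^w-(-1)^w$ is odd for $w\ge 1$ (a small nit: ``consecutive Jacobsthal numbers have the same parity'' fails at $J_0=0$, $J_1=1$, though your induction started at $J_1$ is fine). As for what each approach buys: the paper exploits the self-duality of the binomial row $4\ell+1$, which your route never uses, and in exchange gets the exact value $A=\tfrac13(2^{4\ell}-1)=J_{4\ell}$; your computation is blind to that symmetry but extracts finer bit-level structure, identifying the parity with $J_w$ where $w$ is the binary weight of $\ell$. Amusingly, both proofs terminate at a Jacobsthal number, and both ultimately invoke a cube-root-of-unity filter --- the paper on $(1\pm x)^{4\ell+1}$ itself, you on the small product $\prod_{i\in\mathrm{bits}(\ell')}(1+x^{2^i})$.
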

\begin{proof}
 The number above is odd if and only if 
 \[
  A=\sum_{\substack{k \ge 0 \\ 3k+1 \le 2 \ell}} \binom{4 \ell +1}{2 \ell -1-3k} 
 \]
is odd. Let $r$ be the remainder of $2 \ell -1$ when divided by three. Then
\begin{align}
\sum_{\substack{k \ge 0 \\ r+3k \le 4 \ell +1}} \binom{4 \ell +1}{r+3k} & = \sum_{\substack{k \in \mathbb{Z} \\ 0 \le 2 \ell -1 +3k \le 4 \ell +1}} \binom{4 \ell +1}{2 \ell -1+3k} \label{eq:lemma_multisect_odd}\\
&= A + \sum_{\substack{k > 0 \\ 2 \ell -1 +3k \le 4 \ell +1}} \binom{4 \ell +1}{2 \ell -1+3k} \nonumber \\
&= A + \sum_{\substack{k > 0 \\ 3k \le 2 \ell +2}} \binom{4 \ell +1}{2 \ell +2-3k} \nonumber\\
&= A + \sum_{\substack{k \ge 0 \\ 3k+1 \le 2 \ell}} \binom{4 \ell +1}{2 \ell -1-3k} =2A. \nonumber
\end{align}
The left hand side of (\ref{eq:lemma_multisect_odd}) is also the $r$:th trisection of $(1+x)^{4 \ell +1}$, evaluated at $x=1$. Let $\omega$ be a primitive third root of unity. By (\ref{series_multisect_formula}) 
\begin{align*}
 2A &= \frac{1}{3} ( \omega^{-r}(1+\omega)^{4 \ell +1} + \omega^{-2r} (1+\omega^2)^{4 \ell +1} + \omega^{-3r}(1+\omega^3)) \\
 &= \frac{1}{3}( \omega^{2r}(-\omega^2)^{4 \ell +1} + \omega^r(-\omega)^{4 \ell +1} + 2^{4 \ell +1} )\\
 &= \frac{1}{3}( -\omega^{2(4\ell +r +1)}-\omega^{4 \ell +r+1} + 2^{4 \ell +1}).
\end{align*}
Since $4 \ell +r+1 \equiv 6 \ell \equiv 0  \mod 3$, we have
\[
 2A = \frac{1}{3}( -\omega^{2(4\ell +r +1)}-\omega^{4 \ell +r+1} + 2^{4 \ell +1} )= \frac{2^{4 \ell +1} -2}{3} . 
\]
Then $3A = 2^{4\ell} -1$, and we can see that $A$ is an odd number.

\end{proof}


\begin{thebibliography}{99}
\bibitem[Di]{dibag} 
I. Dibag, 
Factorization in exterior algebras, 
J. Algebra 30, 259--262 (1974).

\bibitem[Fr]{frobergconj} R. Fr\"oberg, 
An inequality for Hilbert series of graded algebras. 
Math. Scand. 56, No. 2: 117--144 (1985).

\bibitem[Fr-Lo]{fr-lo} R. Fr\"oberg and C. L\"ofwall, On Hilbert series for commutative and 
noncommutative graded algebras. J. Pure Appl. Algebra 76, 33--38 (1990).

\bibitem[Fr-Lu]{fr-lu} R. Fr\"oberg, and S. Lundqvist, 
Extremal Hilbert series. 
arXiv: 1711.01232, (2017).

\bibitem[Gr-St]{M2} D. Grayson, M. Stillman, 
Macaulay2, a software system for research in algebraic geometry, 
available at {\tt www.math.uiuc.edu/Macaulay2}.



\bibitem[Ie-Wa]{ie-wa}
S. Iyengar and M. Walker, Examples of finite free complexes of small rank and homology. arXiv: 1706.02156 (2017).

\bibitem[Mi-Na]{mi-na} J. Migliore and U. Nagel,  
Survey Article: A tour of the weak and strong Lefschetz properties. 
J. Comm. Algebra 5, No. 3, 329--358 (2013).

\bibitem[Mo-Sn]{mo-sn} 
G. Moreno-Soc\'ias, J. Snellman, 
Some conjectures about the Hilbert series of generic ideals in the exterior algebra. 
Homology, homotopy and applications 4, No. 2, 409--426, (2002).

\bibitem[Ri]{ri}
J. Riordan,
Combinatorial identities.
John Wiley \& Sons, Inc. (1968)

\bibitem[Vi-El]{Vinberg}
E.B. Vinberg, A.G. Ela\v svili,
Classification of trivectors of the 9-dimensional space.
Sel. Math. Sov. 7, No. 1 , 62--98. (1988)

\end{thebibliography}
\end{document}